\newcommand{\bm}{\mathbf}
\let\oldnl\nl
\newcommand{\nonl}{\renewcommand{\nl}{\let\nl\oldnl}}
\newtheorem{theorem}{Theorem}[section]
\newtheorem{definition}{Definition}[section]
\newtheorem{lemma}[theorem]{Lemma}
\newtheorem{assumption}{Assumption}[section]
\newtheorem{remark}{Remark}[section]
\title{An effective subgradient algorithm via Mifflin's line search for nonsmooth nonconvex multiobjective optimization}
\author[1]{Morteza Maleknia}
\author[2]{Majid Soleimani-damaneh}
\affil[1]{School of Mathematics, Statistics and Computer Science, College of Science, University of
	Tehran, Tehran, Iran}
\affil[2]{Department of
	Mathematical Sciences, Isfahan University of Technology, Isfahan, Iran}
\affil[ ]{\textit {\{m.maleknia@iut.ac.ir,soleimani.d@ut.ac.ir\}}}
\begin{document}

\maketitle

	\section*{Abstract}
	We propose a descent subgradient algorithm for unconstrained nonsmooth nonconvex multiobjective optimization problems. To find a descent direction, we present an iterative process that efficiently approximates the (Goldstein) $\varepsilon$-subdifferential of each objective function. To this end, we develop a new variant of Mifflin's line search in which the subgradients are arbitrary and its finite convergence is proved under a semismooth assumption. In order to reduce the number of subgradient evaluations, we employ a backtracking line search that identifies the objectives requiring an improvement in the current approximation of the $\varepsilon$-subdifferential. Meanwhile, for the remaining objectives, new subgradients are not computed. Unlike bundle-type methods, the proposed approach can handle nonconvexity without the need for algorithmic or parametric adjustments. Moreover, the quadratic subproblems have a simple structure, and hence the method is easy to implement. We analyze the global convergence of the proposed method and prove that any accumulation point of the generated sequence satisfies a necessary Pareto optimality condition. Furthermore, our convergence analysis addresses a theoretical challenge in a recently developed subgradient method. By conducting numerical experiments, we observe the practical capability of the proposed method and evaluate its efficiency when applied to a diverse range of nonsmooth test problems.

	\section{Introduction}
	In many practical situations, one needs to make a decision under multiple conflicting criteria. For instance, consider a refinery that intends to maximize petrol production while minimizing toxic emissions at the same time. This problem is a decision-making problem with two conflicting objectives. Such a problem is called a \emph{\textbf{M}ultiobjective \textbf{O}ptimization \textbf{P}roblem} (MOP). In contrast with single objective problems, due to the conflicts among objectives, there is no solution to an MOP optimizing all  objectives simultaneously. Therefore, the optimality notion is replaced with 
	\emph{Pareto optimality}, which represents an optimal compromise among all objectives. In practical terms, it is of crucial importance to approximate the set of all Pareto optimal points, namely the \emph{Pareto front}, which is a challenging issue. This problem becomes even more complicated when objectives are not smooth and convex. 
	
	In this paper, we consider the following unconstrained multiobjective minimization problem:
	\begin{equation}\label{Main-Problem}
	\min \,\, \left(f_1(\bm x), f_2(\bm x),\ldots,f_p(\bm x)\right) \quad \text{s.t.} \quad \bm x\in\mathbb{R}^n,
	\end{equation} 
	where the objective functions $f_i:\mathbb{R}^n\to\mathbb{R}$, $i=1,\ldots,p$, are locally Lipschitz, but not necessarily smooth or convex. One of the main approaches to deal with MOP \eqref{Main-Problem} is to employ the scalarization methods \cite{Meitinent,Ehrgott2005,MO_Book}. This approach turns an MOP to a nonsmooth single-objective optimization problem. Then, a global optimal solution of the resulted single-objective problem provides a (weak) Pareto optimal point for the MOP. Regarding a nonsmooth single-objective optimization problem, various types of methods are developing rapidly. For instance, subgradient methods \cite{bagirov2012,bagirov2010}, bundle methods \cite{kiwielbook,inexactBundle1,Maleknia-Coap,Maleknia-Oms,BT-method}, and gradient sampling methods \cite{Burke2005,Kiwiel2007,Curtis2012,Maleknia-jota} are among the most common numerical approaches to minimize a locally Lipschitz function.  The weighted sum method \cite{Geof,MO-job} is widely recognized as one of the most popular scalarization methods. By adjusting the weighting vectors, it is possible to obtain the entire Pareto front in the convex case, at least in theory. However, in situations where the image space is not $\mathbb{R}^p_{\geq}$-convex, which occurs in nonconvex cases, a significant portion of the Pareto front cannot be achieved using this method (for a bi-objective example, see Subsection \ref{Subsec}). In addition, for some MOPs, even in the convex case, a broad range of weighting vectors lead to an unbounded single-objective problem (for a bi-objective instance, see \cite{MO2}). Similar limitations can also be observed with other aggregation techniques.

	As another class of methods for solving MOP \eqref{Main-Problem}, one can point to the iterative descent methods. These approaches are extensions of the existing descent methods in single-objective optimization to the multiobjective case, which can be directly applied to the non-scalarized  MOP \eqref{Main-Problem}. These methods are descent in the sense that, during each iteration, a descent direction that simultaneously reduces the value of all the objectives is found and utilized to update the current point. Due to the theoretical and practical challenges in developing descent methods for MOPs, the literature along this line of research is scarce, especially for problems with nonsmooth and nonconvex data. Some extensions of the subgradient methods that can be applied to MOPs with convex objectives can be found in \cite{Multi-sub1,Multi-sub2,Multi-sub3}. As a more recent subgradient method, Gebken and Peitz \cite{Multi-sub4} developed a subgradient method that can efficiently solve nonsmooth nonconvex MOP \eqref{Main-Problem}. In  \cite{techreport-multi}, M\"{a}kel\"{a} et al. presented  an effective generalization of the proximal bundle methods that can deal with MOP \eqref{Main-Problem}. A quasi-Newton method can be found in \cite{QUASI-MO}. Furthermore, some variants of the proximal bundle methods for solving nonsmooth nonconvex MOPs which are able to handle inexact information have been proposed in \cite{Multi-proximal1,Multi-proximal2}.
	
	In this paper, we develop a descent implementable subgradient algorithm to find Pareto optimal points of MOP \eqref{Main-Problem}. The main idea is to develop an iterative approach to efficiently approximate the (Goldstein) $\varepsilon$-subdifferential \cite{Bagirov2014} of each objective function. The least-norm element in the convex hull of the approximated $\varepsilon$-subdifferentials provides a trial search direction. Such an element is computed by solving a convex quadratic subproblem. Next, a limited backtracking line search determines whether we can take a serious step or we need to take a null step to improve our approximations of $\varepsilon$-subdifferentials. In this way, for the given positive sequences $\varepsilon_{\nu}\downarrow0$ and $\delta_{\nu}\downarrow0$, we generate a sequence of $\big(\delta_{\nu}, \mathcal{S}_{\varepsilon_{\nu}}(\bm x_{\nu+1})\big)$-substationary points (see Definition \ref{Def1}), such that any accumulation point of this sequence is Clarke substationary for MOP \eqref{Main-Problem}.
	
	 Regarding the details of the presented algorithm, the following features are highlighted.  The \emph{improvement function} which is used in M\"{a}kel\"{a} et al's proximal bundle method \cite{techreport-multi,Meitinent} poses an underlying issue. Indeed, once a null step occurs, the improvement function is not able to identify those objectives whose bundles of subgradient have not been enriched adequately. Therefore, after a null step, it is inevitable to compute a new subgradient for all the objectives. The same happens in recently developed proximal bundle methods \cite{Multi-proximal1,Multi-proximal2}. To resolve this difficulty, we propose a simple backtracking line search that excludes the objectives for which, after a null step, we do not need to enrich the bundle of information. For the remaining objectives, we develop a new variant of Mifflin's line search \cite{kiwielbook} that finds a new effective subgradient to improve the bundle of information. In this manner, one can significantly reduce the number of subgradient evaluations. Moreover, in Gebken and Peitz \cite{Multi-sub4}, the proposed line search procedure to find effective subgradients lacks the finite convergence property. More precisely, they merely argue that the asymptotic behavior of the proposed line search provides a subdifferential set within which the desired subgradient element exists (see Lemma 3.4 in \cite{Multi-sub4}). In contrast, our proposed line search exactly finds the desired subgradient element in finite number of iterations.
	 
	
	We study the global convergence of the proposed method and show that any accumulation point of the generated sequence is Clarke substationary for MOP \eqref{Main-Problem}. Furthermore, in Gebken and Peitz \cite{Multi-sub4}, convergence of the method to a Clarke substationary point through a sequence of $\big(\delta, \mathcal{S}_\varepsilon(\bm x)\big)$-substationary points remains as a theoretical challenge (see Remark 4.1 in \cite{Multi-sub4}). This problem is comprehensively solved in our convergence analysis. 
	 
	 By means of numerical tests, we show the efficiency and ability of the proposed method in practice. For this purpose, we first analyze the typical behavior of the method on a bi-objective test problem. Next, by using the multi-start technique, we assess the efficiency of our method in approximating the Pareto front. In our third experiment, we seek a sparse solution to an underdetermined linear system by minimizing a nonsmooth bi-objective model. Eventually, a set of nonsmooth convex
	 and nonconvex test problems are considered to provide some comparative results with the multiobjective proximal bundle method proposed by  M\"{a}kel\"{a} et al. \cite{techreport}.
	 
	 The rest of the paper  is organized as follows. Some required basic concepts are provided in Section \ref{Sec2}. Our main strategy for finding a descent direction along with finite convergence property of the proposed line search is given in Section \ref{Sec3}. Global convergence is studied in Section \ref{Sec4}. Numerical results are reported in Section \ref{Sec6}, and Section \ref{Sec7} concludes the paper.

	\label{Sec2}
\section{Mathematical preliminaries}
Throughout this paper, we use the following notations. The usual inner product in the Euclidean space $\mathbb{R}^n$  is denoted by $\bm x^T\bm y$, which induces the Euclidean norm $\lVert\bm x\rVert=(\bm x^T\bm x)^{1/2}$. An open ball with center $\bm x\in \mathbb{R}^n$ and radius $\varepsilon>0$ is denoted by $\mathcal B(\bm x, \varepsilon)$, that is,
$$\mathcal B(\bm x, \varepsilon):=\{\bm y\in\mathbb{R}^n \,\,:\,\, \lVert \bm y-\bm x\rVert< \varepsilon\}. $$
Moreover, $\mathbb{N}$ is the set of natural numbers, $\mathbb{N}_0:=\mathbb{N}\cup\{0\}$, and $\mathbb{R}_+:=(0,\infty).$

Suppose $f:\mathbb{R}^n\to\mathbb{R}$ is a locally Lipschitz function. Then, by Rademacher's theorem \cite{Evans2015}, $f$ is differentiable almost everywhere on $\mathbb{R}^n$. Let
$$\Omega_f:= \{\bm x\in\mathbb{R}^n \,\, : \,\, f\,\, \text{is not differentiable at}\,\, \bm x  \}.$$
Then,  the Clarke subdifferential of $f$ at a point $\bm x\in\mathbb{R}^n$ is defined as \cite{Clarke1990}
\begin{equation*}
\partial f(\bm x):= \texttt{conv} \{\boldsymbol{\xi}\in\mathbb{R}^n \,\,:\,\, \exists \, \{\bm x_i \}\subset\mathbb{R}^n \setminus\Omega_f\,\,\,\, \text{s.t.} \,\,\,\, \bm x_i\to \bm x \,\, \text{and} \,\, \nabla f(\bm x_i)\to \boldsymbol{\xi} \},
\end{equation*}
where $\texttt{conv}$ denotes the convex hull of a set. Furthermore, for any $\varepsilon> 0,$ the (Goldstein) $\varepsilon$-subdifferential of $f$ at a point $\bm x\in\mathbb{R}^n$ is the set  \cite{Bagirov2014}
$$\partial_\varepsilon f(\bm x):=\texttt {cl\,conv}\{\partial f(\bm y) \,\, : \,\, \bm y\in \mathcal{B}(\bm x, \varepsilon) \},$$
in which $\texttt{cl\,conv}$ is the closure of the convex hull.  For any $\varepsilon> 0$ and $\bm x\in\mathbb{R}^n$, the set $\partial_\varepsilon f(\bm x)$ is a nonempty, convex and compact subset of $\mathbb{R}^n$. If $f$ is differentiable at $\bm x\in\mathbb{R}^n$, then $\nabla f(\bm x)\in\partial f(\bm x)$. Furthermore, if $f$ is smooth at $\bm x\in\mathbb{R}^n$, we have $\partial f(\bm x)=\{\nabla f(\bm x)\}$.  Also, the set-valued map $\partial_\varepsilon f:\mathbb{R}^n\rightrightarrows\mathbb{R}^n$ is locally bounded and upper semicontinuous \cite{Clarke1990}. The generalized directional derivative of function $f$ at point $\bm x\in\mathbb{R}^n$ and direction $\bm d\in\mathbb{R}^n$ is given by \cite{Clarke1990}
\begin{equation*}
f^\circ(\bm x; \bm d):=\limsup_{\substack{\bm y\to \bm x\\t\downarrow 0}} \frac{f(\bm y+t\bm d)-f(\bm y)}{t}.
\end{equation*}
For the class of locally Lipschitz functions, $f^\circ(\bm x; \bm d)$ always exists for any $\bm x, \bm d\in\mathbb{R}^n$, and it is shown that $f^\circ(\bm x; \bm d)=\max\{\boldsymbol{\xi}^T\bm d \,\,: \,\, \boldsymbol{\xi}\in\partial f(\bm x)  \}$ \cite{Clarke1990}. 

The notion of \emph{Pareto optimality} is of crucial importance in multiobjective optimization. A point $\bm x\in\mathbb{R}^n$ is called Pareto optimal for problem \eqref{Main-Problem} if there is not any $\bm y\in\mathbb{R}^n$ such that $f_i(\bm y)\leq f_i(\bm x)$, for all $i=1,\ldots,p$, and $f_j(\bm y)<f_j(\bm x)$, for some $j\in\{1,\ldots,p\}$ \cite{Ehrgott2005}. It is recalled that for a point $\bm x\in\mathbb{R}^n$ to be a Pareto optimal, it is necessary that \cite{Meitinent}
 $$\bm 0\in\texttt{conv}\left\{ \cup_{i=1}^p\partial f_i(\bm x)\right\}.$$
A point $\bm x$ which satisfies the above condition is called a \emph{Clarke substationary} point. 

In the current paper, the notion of weakly upper semismoothness is important in designing an effective line search procedure. 

\begin{definition}[\cite{Mifflin-semismooth}]
	The locally Lipschitz function $f:\mathbb{R}^n\to\mathbb{R}$ is called  weakly upper semismooth if  for any $\bm z, \bm d\in\mathbb{R}^n$ and sequences $\{\boldsymbol{\xi}_i\}_i\subset\mathbb{R}^n$ and $\{h_i\}_i\subset\mathbb{R}_+$ satisfying $h_i\downarrow 0$ as $i\to\infty$ and $\boldsymbol{\xi}_i\in\partial f(\bm z+h_i\bm d)$, one has
	$$\limsup_{i\to\infty} \boldsymbol{\xi}_i^T \bm d\geq \liminf_{i\to\infty} \frac{f(\bm z+ h_i\bm d)-f(\bm z)}{h_i}.$$	
\end{definition} 

The class of weakly upper semismooth functions encompasses a wide range of functions, including convex, concave, and functions of max- and min-type \cite{Mifflin-semismooth,Mifflin,bagirov2020}. In fact, semismoothness is a popular assumption which is widely used in the context of general nonsmooth nonconvex optimization \cite{kiwielbook,BT-method,Zowe-book}.
%

\section{Finding a descent direction}\label{Sec3}
In this section, we develop an iterative procedure to find an efficient descent direction for problem \eqref{Main-Problem} at a given point $\bm x\in\mathbb{R}^n$. A direction $\bm d\in\mathbb{R}^n$ is called a descent direction for problem \eqref{Main-Problem} at point $\bm x\in\mathbb{R}^n$ if there exists $z>0$ such that
\begin{equation}\label{dscent-direction}
f_i(\bm x+t \bm d)-f_i(\bm x)<0, \quad \text{for all} \,\, t\in(0,z), \,\, \text{and} \,\, i=1,\ldots,p.
\end{equation}
The following lemma, which is essentially well-known, presents a theoretical approach to find a descent direction. For the sake of completeness, we provide a proof. Before it, let us denote the normal cone \cite{murdokhovich-Book1} of the convex set $C\subset\mathbb{R}^n$ at a point $\bm x\in C$ by $\mathcal{N}(C, \bm x)$.

\begin{lemma}
Assume $\boldsymbol{\xi}^*\neq \bm 0$ solves the following minimization problem:
\begin{equation}\label{steepest-descent}
\min \big\{ \lVert \boldsymbol{\xi}\rVert \,\, : \,\,  \boldsymbol{\xi}\in \text{\rm \texttt{conv}}\{ \cup_{i=1}^p\partial f_i(\bm x)\} \big\}.
\end{equation}
Then the normalized direction 
\begin{equation}\label{d_bar}
{\bar{\bm{d}}}:=-\boldsymbol{\xi}^*/\lVert \boldsymbol{\xi}^*\rVert,
\end{equation}
is a descent direction for multiobjective problem \eqref{Main-Problem} at point $\bm x$.	
\end{lemma}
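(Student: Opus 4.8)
The plan is to exploit the fact that $\boldsymbol{\xi}^*$ is the minimum-norm element of the convex compact set $C:=\texttt{conv}\{\cup_{i=1}^p\partial f_i(\bm x)\}$ (convexity and compactness follow since each $\partial f_i(\bm x)$ is convex compact and the convex hull of a finite union of compacta in $\mathbb{R}^n$ is compact), equivalently the Euclidean projection of $\bm 0$ onto $C$. First I would record the first-order optimality condition for problem \eqref{steepest-descent}: since $\boldsymbol{\xi}^*$ minimizes the differentiable objective $\tfrac12\lVert\boldsymbol{\xi}\rVert^2$ over the convex set $C$, one has $-\boldsymbol{\xi}^*\in\mathcal{N}(C,\boldsymbol{\xi}^*)$, which translates into the variational inequality $(\boldsymbol{\xi}-\boldsymbol{\xi}^*)^T\boldsymbol{\xi}^*\geq 0$ for every $\boldsymbol{\xi}\in C$. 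Rearranging yields the key estimate $\boldsymbol{\xi}^T\boldsymbol{\xi}^*\geq\lVert\boldsymbol{\xi}^*\rVert^2$ for all $\boldsymbol{\xi}\in C$, and in particular for every $\boldsymbol{\xi}\in\partial f_i(\bm x)\subseteq C$ and every $i\in\{1,\dots,p\}$.

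Next I would pass to directional derivatives. Using ${\bar{\bm{d}}}=-\boldsymbol{\xi}^*/\lVert\boldsymbol{\xi}^*\rVert$ from \eqref{d_bar} together with the preceding estimate, for each index $i$ and each $\boldsymbol{\xi}\in\partial f_i(\bm x)$ we get $\boldsymbol{\xi}^T{\bar{\bm{d}}}=-\boldsymbol{\xi}^T\boldsymbol{\xi}^*/\lVert\boldsymbol{\xi}^*\rVert\leq-\lVert\boldsymbol{\xi}^*\rVert$. Taking the maximum over $\partial f_i(\bm x)$ and invoking the identity $f_i^\circ(\bm x;{\bar{\bm{d}}})=\max\{\boldsymbol{\xi}^T{\bar{\bm{d}}}:\boldsymbol{\xi}\in\partial f_i(\bm x)\}$ recalled in Section~\ref{Sec2}, I obtain
\begin{equation*}
f_i^\circ(\bm x;{\bar{\bm{d}}})\leq-\lVert\boldsymbol{\xi}^*\rVert<0,\qquad i=1,\dots,p,
\end{equation*}
where strictness uses the hypothesis $\boldsymbol{\xi}^*\neq\bm 0$.

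Finally I would convert this strictly negative generalized directional derivative into genuine one-sided descent. Specializing the $\limsup$ defining $f_i^\circ$ to the base point $\bm y=\bm x$ gives $\limsup_{t\downarrow0}\big(f_i(\bm x+t{\bar{\bm{d}}})-f_i(\bm x)\big)/t\leq f_i^\circ(\bm x;{\bar{\bm{d}}})\leq-\lVert\boldsymbol{\xi}^*\rVert<0$. Since this upper limit is strictly negative, for each $i$ there is $z_i>0$ such that the difference quotient stays below $-\tfrac12\lVert\boldsymbol{\xi}^*\rVert$ on $(0,z_i)$, hence $f_i(\bm x+t{\bar{\bm{d}}})-f_i(\bm x)<0$ there. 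Setting $z:=\min_{1\le i\le p}z_i>0$ furnishes a single threshold valid for all objectives, which is exactly condition \eqref{dscent-direction}.

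The argument is essentially routine; the only point demanding care is the last step, namely that the Clarke generalized directional derivative dominates the ordinary upper difference quotient. This domination is immediate once one notices that fixing $\bm y=\bm x$ in the $\limsup$ defining $f_i^\circ$ can only decrease the value, but it is the sole place where the possibly nonconvex and nonsmooth nature of the $f_i$ enters, since for general locally Lipschitz functions the two-sided directional derivative need not exist and one cannot work with $f_i'(\bm x;{\bar{\bm{d}}})$ directly.
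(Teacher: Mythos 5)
Your proposal is correct and follows essentially the same route as the paper's own proof: the normal-cone optimality condition $-\boldsymbol{\xi}^*\in\mathcal{N}(C,\boldsymbol{\xi}^*)$, the resulting estimate $\boldsymbol{\xi}^T\boldsymbol{\xi}^*\geq\lVert\boldsymbol{\xi}^*\rVert^2$, and the conclusion $f_i^\circ(\bm x;\bar{\bm d})\leq-\lVert\boldsymbol{\xi}^*\rVert<0$ for all $i$. The only difference is that you carefully spell out the final step (that the Clarke derivative dominates the upper difference quotient at $\bm y=\bm x$, yielding a uniform threshold $z=\min_i z_i$), which the paper compresses into ``the assertion follows immediately from the definition of $f^\circ(\bm x;\bm d)$''; your expanded version is a correct elaboration, not a different argument.
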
 
\begin{proof}
	Compactness of $\texttt{conv}\{ \cup_{i=1}^p\partial f_i(\bm x)\}$ yields the existence of $\boldsymbol{\xi}^*$. Since $\boldsymbol{\xi}^*$ solves  problem \eqref{steepest-descent}, we deduce \cite{Rockafellar2004}
	$$-\boldsymbol{\xi}^*\in \mathcal{N}\left(\texttt{conv} \{\cup_{i=1}^{p}\partial f_i(\bm x) \}, \boldsymbol{\xi}^*\right), $$
	yielding 
	\begin{equation*}\label{L0-1}
	\boldsymbol{\xi}^T \boldsymbol{\xi}^*\geq \lVert \boldsymbol{\xi}^*\rVert^2, \quad \text{for all} \,\, \boldsymbol{\xi}\in \texttt{conv} \{\cup_{i=1}^{p}\partial f_i(\bm x) \},
	\end{equation*}
	which is equivalent to
	\begin{equation*}\label{L0-2}
	-\boldsymbol{\xi}^T \frac{\boldsymbol{\xi}^*}{\lVert \boldsymbol{\xi}^* \rVert  }\leq - \lVert \boldsymbol{\xi}^*\rVert<0, \quad \text{for all} \,\, \boldsymbol{\xi}\in \texttt{conv} \{\cup_{i=1}^{p}\partial f_i(\bm x) \}.
	\end{equation*}
	Consequently, $\boldsymbol{\xi}^T\bm d<0$ for all $\boldsymbol{\xi}\in \texttt{conv} \{\cup_{i=1}^{p}\partial f_i(\bm x) \}$, which means $f^\circ_i(\bm x; \bm d)<0$, for $i=1,\ldots,p$, and the assertion follows immediately from the definition of $f^\circ(\bm x; \bm d)$.
\end{proof}

For some $i\in\{1,\ldots,p\}$, assume $\bm x\in\mathbb{R}^n$ is a continuously differentiable point for the objective $f_i$, which is close  to a nonsmooth region. In this case, the subdifferential set  $\partial f_i(\bm x)$ coincides with the singleton $\{\nabla f_i(\bm x)\}$. In fact, $\partial f_i(\bm x)$ does not contain any useful information of the nearby nonsmooth region, and consequently the search direction ${\bar{\bm{d}}}$ given by \eqref{d_bar} may not be an appropriate descent direction. More precisely, when $\bm x$ is near a point where $f_i$ is not differentiable, this search direction fulfills decrease condition \eqref{dscent-direction} only for very small values of $t$. In contrast, $\partial_\varepsilon f_i(\bm x)$ can capture some local information of the nearby nonsmooth region, and one can stabilize our choice of search direction by replacing $\partial f_i(\bm x)$ by $\partial_\varepsilon f_i(\bm x)$. In this respect, the following minimization problem is considered:
\begin{equation}\label{epsilon-steepest-descent}
\min \big\{ \lVert \boldsymbol{\xi}\rVert \,\, : \,\,  \boldsymbol{\xi}\in\texttt{conv}\{ \cup_{i=1}^p\partial_\varepsilon f_i(\bm x)\} \big\}.
\end{equation}
One major difficulty with problem \eqref{epsilon-steepest-descent} is that, for each objective $f_i$, we need to know the entire subdifferential $ \partial_\varepsilon f_i(\bm x)$, which is not an easy task in many practical situations.  In this regard, we develop an iterative algorithm to efficiently  approximate $\partial_\varepsilon f_i(\bm x)$, for $i=1,\ldots,p$.

For given  $\varepsilon>0, m\in\mathbb{N}$ and $\bm x\in\mathbb{R}^n$, let
$$\bm{\mathcal{S}}_\varepsilon(\bm x):=\left[\begin{array}{cccc}
	\boldsymbol{\xi}_{1,1} & \boldsymbol{\xi}_{1,2} & \ldots & \boldsymbol{\xi}_{1,p} \\
	\boldsymbol{\xi}_{2,1} & \boldsymbol{\xi}_{2,2} & \ldots & \boldsymbol{\xi}_{2,p}\\
	\vdots & \vdots & \ddots& \vdots\\
	\boldsymbol{\xi}_{m,1} & \boldsymbol{\xi}_{m,2} & \ldots & \boldsymbol{\xi}_{m,p}
\end{array} \right], $$ 
be a collection of subgradients computed throughout iterations $1$ to $m$ having the property that
$$\boldsymbol{\xi}_{j,i}\in\partial_\varepsilon f_i(\bm x), \quad \text{for all} \,\, i=1,\ldots,p, \, \, \text{and} \,\, j=1,\ldots,m.$$
In fact, the $i$-th column of $\bm{\mathcal{S}}_\varepsilon(\bm x)$ stores a number of elements of $\partial_\varepsilon f_i(\bm x)$. We denote the $i$-th column of $\bm{\mathcal{S}}_\varepsilon(\bm x)$ by $\bm{\mathcal{S}}_{\varepsilon,i}(\bm x)$. Then, for each $i\in\{1,\ldots,p\}$,
$$\texttt{conv}\{\bm{\mathcal{S}}_{\varepsilon,i}(\bm x) \} \subset \partial_\varepsilon f_i(\bm x), $$
is an inner approximation of $\partial_\varepsilon f_i(\bm x)$, and consequently a practical variant of problem \eqref{epsilon-steepest-descent} can be given by
\begin{equation}\label{Main-subproblem}
\min \big\{ \lVert \boldsymbol{\xi}\rVert \,\, : \,\,  \boldsymbol{\xi}\in\texttt{conv}\{ \cup_{i=1}^p\bm{\mathcal{S}}_{\varepsilon,i}(\bm x)\} \big\}.
\end{equation}
Suppose $\boldsymbol{\xi}^*\neq \bm 0$ solves problem \eqref{Main-subproblem}, and define the normalized direction ${\bm d}\in\mathbb{R}^n$ by
$${\bm d}:=-\boldsymbol{\xi}^*/\lVert \boldsymbol{\xi}^* \rVert.$$
If $\texttt{conv}\{\cup_{i=1}^p\bm{\mathcal{S}}_{\varepsilon,i}(\bm x)\}$ is an adequate approximation of $\texttt{conv}\{ \cup_{i=1}^p\partial_\varepsilon f_i(\bm x)\}$, then one can find the step size $t>0$ satisfying
\begin{equation}\label{Suff-decrease}
f_i(\bm x+t{\bm d})-f_i(\bm x)\leq -\beta t \lVert \boldsymbol{\xi}^* \rVert, \quad \text{for all} \,\, i=1,\ldots,p, \quad \text{and} \quad t\geq \bar t,
\end{equation}
in which $\beta\in(0,1)$ is the Armijo parameter which guarantees a sufficient decrease in objectives $f_i$, and the lower bound $\bar t\in(0, \varepsilon)$ excludes small values for the step size $t$. In the case that condition \eqref{Suff-decrease} is met, we employ the search direction $\bm d$ for taking a serious step, i.e., the current point $\bm x$ is updated by $\bm x^+:=\bm x+t\bm d.$
 Otherwise, we infer that  our approximation of $\texttt{conv}\{ \cup_{i=1}^p\partial_\varepsilon f_i(\bm x)\}$ must be improved. For this purpose, let
 \begin{equation}\label{Insuficient_index}
 	\mathcal{I}:=\left\{i\in\{1,\ldots,p\} \,\, : \,\, \nexists \,\, t\geq \bar t \,\,\,\, \text{\rm s.t.} \,\,\,\, f_i(\bm x+t{\bm d})-f_i(\bm x)\leq -\beta t \lVert \boldsymbol{\xi}^* \rVert  \right\}.
 \end{equation}
 Then, for each $i\in\mathcal{I}$, we need to improve our approximation of $\partial_\varepsilon f_i(\bm x)$. However, in practice, computing the index set $\mathcal{I}$  is not possible because, for some $i$, checking the condition 
 $$\nexists \,\, t\geq \bar t \,\,\,\, \text{\rm s.t.} \,\,\,\, f_i(\bm x+t{\bm d})-f_i(\bm x)\leq -\beta t \lVert \boldsymbol{\xi}^* \rVert,$$
 is impractical. Therefore, in Algorithm \ref{Alg1}, we propose a limited backtracking line search to provide an estimation of the index set $\mathcal{I}$, namely $\tilde{\mathcal{I}}$. Regarding this algorithm, some explanations are as follows. The algorithm consists of two loops. The upper loop tests the sufficient decrease condition \eqref{Suff-decrease} for a limited number of step sizes. If the algorithm terminates within this loop, we conclude $\mathcal{I}=\emptyset$, and hence $\tilde{\mathcal{I}}=\emptyset$ is returned. Moreover, one can employ the obtained step size $t\geq \bar t>0$ to take a serious step. Otherwise, the algorithm executes the lower loop, in which the step size $\bar t>0$ is used to provide the nonempty estimation $\tilde{\mathcal{I}}$ of $\mathcal{I}$. Furthermore, the step size $t$ is set to zero as an indication of taking a null step. Consequently, at termination of Algorithm~\ref{Alg1}, one of the following cases occurs:
 \begin{itemize}
 	\item Case I. Algorithm \ref{Alg1} returns $\{\tilde{\mathcal{I}}, t\}$ with $\tilde{\mathcal{I}}=\emptyset$ and $t\geq\bar t>0$.
 	\item Case II. Algorithm \ref{Alg1} returns $\{\tilde{\mathcal{I}}, t\}$ with $\tilde{\mathcal{I}}\neq\emptyset$ and $t=0$.
 \end{itemize}

 \begin{algorithm}
 	\caption{Limited Backtracking Line Search (LBLS)}
 	\label{Alg1}
 	\hspace*{\algorithmicindent}\textbf{Inputs:} Objectives $\{f_i\}_{i=1}^p$, current point $\bm x\in\mathbb{R}^n$, search direction ${\bm d}=-\boldsymbol{\xi}^*/\lVert \boldsymbol{\xi}^*\rVert$ with $\boldsymbol{\xi}^*\neq \bm 0$, Armijo coefficient  $\beta\in(0,1)$, and lower bound $\bar t\in(0, \varepsilon)$.  \\
 	\hspace*{\algorithmicindent}\textbf{Parameters:}  Reduction factor $r\in(0,1)$, initial step size $t_0\in(\bar t,\infty)$, and number of backtracks $\tau\in\mathbb{N}$ with $r^{\tau}t_0>\bar t> r^{\tau+1}t_0$. \\
 	\hspace*{\algorithmicindent}\textbf{Outputs:} Step size $t\geq 0$ and index set $\tilde{\mathcal{I}}\subset\{1,\ldots,p\}$.\\ 
 	
 	\hspace*{\algorithmicindent}\textbf{Function:} $\{t,\tilde{\mathcal{I}} \}$\,\,=\,\,\texttt{LBLS}\,($\{f_i\}_{i=1}^p, \bm x, \bm d, \beta, \bar t$\,) 
 	
 	\vspace{1mm}
 	\begin{algorithmic}[1]
 		\STATE{\textbf{Initialization:} Set $\tilde{\mathcal{I}}:=\emptyset$ ;}\\
 		\FOR{$t\in\{t_0, rt_0,r^2t_0,\ldots,r^\tau t_0, \bar t \,  \}$}{
 			\IF{$f_i(\bm x+t{\bm d})-f_i(\bm x)\leq -\beta t \lVert \boldsymbol{\xi}^* \rVert$, for all $i=1,\ldots,p$,} \vspace{1mm}
 			\RETURN{\{$t,\tilde{\mathcal{I}}$\} and \textbf{STOP} ;   }
 			\ENDIF
 			
 		}
 		\ENDFOR
 		\FOR{$i\in\{1,\ldots,p\}$} {
 			\IF{$f_i(\bm x+\bar t{\bm d})-f_i(\bm x)> -\beta \bar t\, \lVert \boldsymbol{\xi}^* \rVert$,} \vspace{1mm}
 			\STATE{$\tilde{\mathcal{I}}:=\tilde{\mathcal{I}}\cup\{i\};$}
 			
 			\ENDIF
 		}
 		\ENDFOR
 		\STATE{Set $t:=0$ ;}
 		\RETURN{\{$t,\tilde{\mathcal{I}}$\}  and \textbf{STOP} ;   }
 	\end{algorithmic}
 		\hspace*{\algorithmicindent}\textbf{ End Function}

 \end{algorithm}
 
 If Case I takes place, as already mentioned, we take a serious step by setting $\bm x^+:=\bm x+t \bm d$. Thus, let us proceed with Case II. In this case, we append the following  row of subgradients 
 $$[\boldsymbol{\xi}_{m+1,1}, \boldsymbol{\xi}_{m+1,2}, \ldots, \boldsymbol{\xi}_{m+1,p}],$$
 to the current collection $\bm{\mathcal{S}}_{\varepsilon}(\bm x)$ such that
 \begin{align}\label{Criterions}
 \begin{split}
 &(\text{i})\,\,  \boldsymbol{\xi}_{m+1,i}:=\boldsymbol{\xi}_{m,i}, \,\, \text{for all}\,\, i\notin \tilde{\mathcal{I}}. \\&
 (\text{ii})\,\,  \boldsymbol{\xi}_{m+1,i}\in\partial_\varepsilon f_i(\bm x)\,\, \text{and} \,\, 
 \boldsymbol{\xi}_{m+1,i}\notin\texttt{conv} \{\mathcal{S}_{\varepsilon,i}(\bm x)\},\,\,   \text{for all} \,\, i\in \tilde{\mathcal{I}}.
 \end{split}
   \end{align}
 The first part of condition \eqref{Criterions}  states that for the objectives $f_i$ with $i\notin \tilde{\mathcal{I}}$, we do not compute a new element of $\partial_\varepsilon f_i(\bm x)$. By the second part of this condition, for each $i\in\tilde{\mathcal{I}}$, we compute a new element of  $\partial_\varepsilon f_i(\bm x)$ which significantly improves our approximation of  $\partial_\varepsilon f_i(\bm x)$.
In other words, if we update $\bm{\mathcal{S}}_{\varepsilon, i}(\bm x)$ by 
 $$\bm{\mathcal{S}}_{\varepsilon, i}^+(\bm x):=\bm{\mathcal{S}}_{\varepsilon, i}(\bm x)\cup\{\boldsymbol{\xi}_{m+1, i} \},$$
 then part (ii) implies $\texttt{conv}\{\bm{\mathcal{S}}_{\varepsilon,i}(\bm x) \}\subsetneq \texttt{conv}\{\bm{\mathcal{S}}_{\varepsilon,i}^+(\bm x) \}$.  Concerning part (ii) of condition \eqref{Criterions}, in the following lemma, we present a sufficient condition for the new subgradient $\boldsymbol{\xi}_{m+1,i}\in\partial_\varepsilon f_i(\bm x)$ to  satisfy $$\boldsymbol{\xi}_{m+1,i}\notin\texttt{conv} \{\mathcal{S}_{\varepsilon,i}(\bm x)\}.$$ 
\begin{lemma}
	Assume $ \bar i\in\tilde{\mathcal{I}}$ and $\boldsymbol{\xi}^*\neq \bm 0$ solves problem \eqref{Main-subproblem}. For the direction ${\bm d}=-\boldsymbol{\xi}^*/\lVert \boldsymbol{\xi}^* \rVert$ and scalar $c\in(0,1)$, suppose  $\boldsymbol{\xi}_{m+1,\bar i}\in\partial_\varepsilon f_{\bar i}(\bm x)$ satisfies
	\begin{equation}\label{Criterion1}
	 \boldsymbol{\xi}_{m+1,\bar i}^T {\bm d} \geq -c \lVert  \boldsymbol{\xi}^*\rVert.
	\end{equation}
	Then $\boldsymbol{\xi}_{m+1,\bar i}\notin\text{\rm\texttt{conv}}\{\bm{\mathcal{S}}_{\varepsilon, \bar i}(\bm x)\}$.
\end{lemma}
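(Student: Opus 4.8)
The plan is to exploit the first-order optimality of $\boldsymbol{\xi}^*$ as the minimum-norm element of the convex set $C:=\texttt{conv}\{\cup_{i=1}^p\bm{\mathcal{S}}_{\varepsilon,i}(\bm x)\}$, reusing exactly the normal-cone characterization employed in the proof of the preceding lemma. Since $\boldsymbol{\xi}^*$ solves \eqref{Main-subproblem}, we have $-\boldsymbol{\xi}^*\in\mathcal{N}(C,\boldsymbol{\xi}^*)$, which yields the variational inequality
\begin{equation*}
\boldsymbol{\xi}^T\boldsymbol{\xi}^*\geq\lVert\boldsymbol{\xi}^*\rVert^2,\qquad\text{for all }\boldsymbol{\xi}\in C.
\end{equation*}
The first step, then, is simply to record this inequality and note that it holds on the whole feasible set $C$.

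The second step is the key observation that $\texttt{conv}\{\bm{\mathcal{S}}_{\varepsilon,\bar i}(\bm x)\}\subset C$, since the $\bar i$-th column is one of the sets whose union generates $C$. Hence the variational inequality restricts to the $\bar i$-th sub-hull. Dividing by $\lVert\boldsymbol{\xi}^*\rVert>0$ (recall $\boldsymbol{\xi}^*\neq\bm 0$) and rewriting in terms of ${\bm d}=-\boldsymbol{\xi}^*/\lVert\boldsymbol{\xi}^*\rVert$ gives
\begin{equation*}
\boldsymbol{\xi}^T{\bm d}\leq-\lVert\boldsymbol{\xi}^*\rVert,\qquad\text{for all }\boldsymbol{\xi}\in\texttt{conv}\{\bm{\mathcal{S}}_{\varepsilon,\bar i}(\bm x)\}.
\end{equation*}
This says that along ${\bm d}$ every element of the current $\bar i$-th approximation has directional slope at most $-\lVert\boldsymbol{\xi}^*\rVert$.

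The final step is a short argument by contradiction. Suppose, contrary to the claim, that $\boldsymbol{\xi}_{m+1,\bar i}\in\texttt{conv}\{\bm{\mathcal{S}}_{\varepsilon,\bar i}(\bm x)\}$. Then the inequality of the second step forces $\boldsymbol{\xi}_{m+1,\bar i}^T{\bm d}\leq-\lVert\boldsymbol{\xi}^*\rVert$. On the other hand, the hypothesis \eqref{Criterion1} together with $c\in(0,1)$ and $\lVert\boldsymbol{\xi}^*\rVert>0$ gives
\begin{equation*}
\boldsymbol{\xi}_{m+1,\bar i}^T{\bm d}\geq-c\lVert\boldsymbol{\xi}^*\rVert>-\lVert\boldsymbol{\xi}^*\rVert,
\end{equation*}
which is a contradiction. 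Therefore $\boldsymbol{\xi}_{m+1,\bar i}\notin\texttt{conv}\{\bm{\mathcal{S}}_{\varepsilon,\bar i}(\bm x)\}$, as asserted.

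I do not anticipate a genuine obstacle here: the entire argument rests on the minimum-norm variational inequality already established for the full collection, the elementary set inclusion of the $\bar i$-th sub-hull into $C$, and the strict gap created by the condition $c<1$. The only point requiring a little care is to apply the optimality inequality on the sub-hull rather than on $C$ itself — which is legitimate precisely because of that inclusion — and to make sure the strictness coming from $c<1$ is what separates the new subgradient from the old approximation.
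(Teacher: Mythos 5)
Your proposal is correct and follows essentially the same route as the paper: both rest on the minimum-norm variational inequality $\boldsymbol{\xi}^T\boldsymbol{\xi}^*\geq\lVert\boldsymbol{\xi}^*\rVert^2$ derived from $-\boldsymbol{\xi}^*\in\mathcal{N}\left(\texttt{conv}\{\cup_{i=1}^{p}\bm{\mathcal{S}}_{\varepsilon,i}(\bm x)\},\boldsymbol{\xi}^*\right)$, combined with the strict gap from $c<1$ and the inclusion of the $\bar i$-th sub-hull in the full convex hull. The only cosmetic difference is that the paper first excludes $\boldsymbol{\xi}_{m+1,\bar i}$ from the full hull and then passes to the sub-hull, while you restrict the inequality to the sub-hull first; the mathematical content is identical.
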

\begin{proof}
	Since $\boldsymbol{\xi}^*$ is the optimal solution of problem \eqref{Main-subproblem}, one can conclude \cite{Rockafellar2004}
	$$-\boldsymbol{\xi}^*\in \mathcal{N}\left(\texttt{conv} \{\cup_{i=1}^{p}\mathcal{S}_{\varepsilon,i}(\bm x) \}, \boldsymbol{\xi}^*\right), $$
	which implies 
	\begin{equation}\label{L1-1}
	\boldsymbol{\xi}^T \boldsymbol{\xi}^*\geq \lVert \boldsymbol{\xi}^*\rVert^2, \quad \text{for all} \,\, \boldsymbol{\xi}\in \texttt{conv} \{\cup_{i=1}^{p}\mathcal{S}_{\varepsilon,i}(\bm x) \}.
	\end{equation}
	Thus, if $\boldsymbol{\xi}_{m+1, \bar i}\in\partial_\varepsilon f_{\bar i}(\bm x)$ satisfies $\boldsymbol{\xi}_{m+1, \bar i}^T {\bm d} \geq -c \lVert \boldsymbol{\xi}^*\rVert$, inequality \eqref{L1-1} yields $\boldsymbol{\xi}_{m+1, \bar i}\notin\texttt{conv} \{\cup_{i=1}^{p}\mathcal{S}_{\varepsilon,i}(\bm x) \}$, and consequently 
	$$\boldsymbol{\xi}_{m+1, \bar i}\notin \text{\rm\texttt{conv}}\{\bm{\mathcal{S}}_{\varepsilon, \bar i}(\bm x)\}. $$
	 \end{proof}
	 In Algorithm \ref{Line-Search}, we develop a new variant of Mifflin's line search \cite{kiwielbook} to find a new subgradient for objective $f_i,\, i\in\tilde{\mathcal{I}}$, satisfying condition \eqref{Criterion1}. This algorithm consists of two conditional blocks. The first one modifies the interval within which we look for an effective subgradient, and the second one checks condition~\eqref{Criterion1}. It is also noted that, in this algorithm, $t_s$ varies within the interval $(0, \varepsilon)$ and $\boldsymbol{\xi}_s\in\partial f_i(\bm x + t_s \bm d)$, for every $s\geq 0$. Therefore, the output of this algorithm is an element of $\partial_\varepsilon f_i(\bm x)$.
	 
	 \begin{algorithm}
	 	\caption{Finding an Effective Subgradient (FES)}
	 	\label{Line-Search}
	 	\hspace*{\algorithmicindent}\textbf{Inputs:} Objective $f_i$ with $i\in\tilde{\mathcal{I}}$, current point $\bm x\in\mathbb{R}^n$, search direction ${\bm d}=-\boldsymbol{\xi}^*/\lVert \boldsymbol{\xi}^*\rVert$ with $\boldsymbol{\xi}^*\neq \bm 0$, and radius $\varepsilon\in(0,1)$. \\
	 	\hspace*{\algorithmicindent}\textbf{Parameters:} Armijo coefficient $\beta\in(0,1)$ and lower bound $\bar t\in (0,\varepsilon)$  as set in Algorithm \ref{Alg1}, scale parameter $c\in(0,1)$ with $\beta<c$, and reduction factor $\eta\in(0,0.5)$. \\
	 	\hspace*{\algorithmicindent}\textbf{Output:} A subgradient $\boldsymbol{\xi}\in\partial_\varepsilon f_{i}(\bm x)$.\\
	 	
	 	\hspace*{\algorithmicindent}\textbf{Function:} $\boldsymbol{\xi}$\,\,=\,\,\texttt{FES}\,($f_i$, $\mathbf x$, ${\bm d}$, $\varepsilon$)
	 	
	 	\begin{algorithmic}[1]
	 		\STATE{\textbf{Initialization:} Set $t_0:=\bar t$, compute $\boldsymbol{\xi}_0\in\partial f_i(\bm x+t_0{\bm d})$, and set $t^l_0:=0, t^u_0:=\varepsilon$, $s:=0$ ;}
	 		\WHILE{ true }
	 		
	 		\IF{$f_i(\bm x+t_s{\bm d})-f_i(\bm x)\leq -\beta\, t_s\, \lVert \boldsymbol{\xi}^*\rVert,$}
	 		\STATE {Set $t^l_{s+1}:=t_s, \,\,\,\, t^u_{s+1}:=t^u_s$ ;} 
	 		\ELSE 
	 		\STATE{Set $t^l_{s+1}:=t^l_s, \,\,\,\, t^u_{s+1}:=t_s$ ;}
	 		\ENDIF
	 		
	 	   \IF{$\boldsymbol{\xi}_s^T{\bm d}\geq -c \lVert \boldsymbol{\xi}^*\rVert ,$} 
	 		
	 		\RETURN {$\boldsymbol{\xi}_s$ and \textbf{STOP} ;}
	 		\ENDIF
	 		\STATE{Choose $t_{s+1}\in \left[ t^l_{s+1}+\eta(t^u_{s+1}-t^l_{s+1}),\, t^u_{s+1}-\eta(t^u_{s+1}-t^l_{s+1})   \right]$  ;} \label{Line11}
	 		\STATE{Compute $\boldsymbol{\xi}_{s+1}\in \partial f_i(\bm x+t_{s+1} {\bm d})$ ;}
	 		\STATE{Set $s:=s+1$ ;}
	 		
	 		\ENDWHILE
	 	\end{algorithmic}
	 	\hspace*{\algorithmicindent}\textbf{ End Function}
	 \end{algorithm}

In the rest of this section, it is proved that Algorithm~\ref{Line-Search} terminates after finitely many iterations. To this end, we provide some auxiliary results in the next lemma.

\begin{lemma}\label{L2} If  Algorithm \ref{Line-Search} does not terminate ($s\to\infty$), then the followings hold:
	\begin{itemize}
		\item[(i)]  We have $t_s\in\{t_{s+1}^l, t_{s+1}^u  \}$, for all $s\geq 0$. In addition, for all $s\geq 1$,
		\begin{align}
		&0<t^u_{s+1}-t^l_{s+1}\leq (1-\eta) (t^u_{s}-t^l_{s}), \label{L2-1}\\&
		0\leq t^l_s\leq t^l_{s+1}< t^u_{s+1}\leq t^u_s\leq \varepsilon. \label{L2-2}
		\end{align}
		\item[(ii)] There exists $t^*\in[0,\varepsilon]$ such that $t^u_s\downarrow t^*$, $t^l_s\uparrow t^*$, and $t_s\to t^*$ as $s\to \infty$. Moreover $$t^*\in \bm{T}:=\{t\,\, : \,\, f_i(\bm x +t {\bm d})-f_i(\bm x)\leq -\beta\, t\,  \lVert\boldsymbol{\xi}^*\rVert   \}.$$
		\item[(iii)] Suppose $\bm{S}:=\{ s\,\, : \, \, t^u_{s+1}=t_s \}$. Then $\bm{S}$ is an infinite set. 
		
	\end{itemize}

\end{lemma}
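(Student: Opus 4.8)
The plan is to handle the three items in the order stated, since (ii) uses the contraction and nesting established in (i), and (iii) uses the limit membership $t^*\in\bm T$ established in (ii).

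For part (i) I would read the updated endpoints $t_{s+1}^l,t_{s+1}^u$ directly off the first conditional block of Algorithm~\ref{Line-Search}. When the Armijo test at $t_s$ succeeds we set $t_{s+1}^l:=t_s$, $t_{s+1}^u:=t_s^u$; otherwise $t_{s+1}^l:=t_s^l$, $t_{s+1}^u:=t_s$. In either branch $t_s$ becomes one of the two new endpoints, which is the first claim. The nesting \eqref{L2-2} then follows by induction from $t_0^l=0$, $t_0^u=\varepsilon$, since each branch either raises the lower endpoint to $t_s$ or lowers the upper endpoint to $t_s$ while leaving the other fixed. For the contraction \eqref{L2-1} I would use that, for $s\ge1$, the point $t_s$ was chosen in Line~\ref{Line11} from the $\eta$-shrunk interval $[t_s^l+\eta(t_s^u-t_s^l),\,t_s^u-\eta(t_s^u-t_s^l)]$; hence whichever side of $[t_s^l,t_s^u]$ is discarded, at least an $\eta$-fraction of the length is removed, giving $t_{s+1}^u-t_{s+1}^l\le(1-\eta)(t_s^u-t_s^l)$, while strict interiority (ensured by $\eta\in(0,0.5)$) keeps the new length positive.

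For part (ii), the sequences $\{t_s^u\}$ and $\{t_s^l\}$ are monotone and bounded by \eqref{L2-2}, hence convergent; iterating \eqref{L2-1} gives $t_s^u-t_s^l\le(1-\eta)^{s-1}(t_1^u-t_1^l)\to0$, so they share a common limit $t^*\in[0,\varepsilon]$, and the squeeze $t_s^l\le t_s\le t_s^u$ yields $t_s\to t^*$. To place $t^*$ in $\bm T$ I would show by induction that $t_s^l\in\bm T$ for every $s$: the base case $t_0^l=0$ lies in $\bm T$ because the defining difference vanishes, and at each step the lower endpoint is either carried over or reset to $t_s$, the latter happening exactly in the branch where the Armijo inequality holds at $t_s$, i.e. $t_s\in\bm T$. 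Since $f_i$ is locally Lipschitz and thus continuous, $\bm T$ is closed, so passing to the limit in $t_s^l\uparrow t^*$ gives $t^*\in\bm T$.

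For part (iii) I would argue by contradiction, assuming $\bm S$ finite. The first and key observation is that $\bm S\neq\emptyset$: Algorithm~\ref{Line-Search} is invoked only for an index $i\in\tilde{\mathcal{I}}$, and by the lower loop of Algorithm~\ref{Alg1} this means the Armijo inequality fails at $\bar t$; since $t_0=\bar t$, the iteration $s=0$ takes the \texttt{else} branch, so $t_1^u=t_0$ and hence $0\in\bm S$. Now set $s':=\max\bm S$. For every $s>s'$ the step updates the lower endpoint, so the upper endpoint is frozen at $t_{s'+1}^u=t_{s'}$, whence $t^*=\lim_s t_s^u=t_{s'}$. But $s'\in\bm S$ means Armijo failed at $t_{s'}$, i.e. $t_{s'}\notin\bm T$, contradicting $t^*=t_{s'}\in\bm T$ from part~(ii). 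Therefore $\bm S$ is infinite.

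I expect the main obstacle to be the bookkeeping in part~(iii), and specifically the point that ties everything together: the pure-descent scenario in which the upper endpoint never moves and $\bm S$ could be empty is precisely the case excluded by the invocation condition $i\in\tilde{\mathcal{I}}$ (equivalently, failure of Armijo at $\bar t=t_0$), which plants a no-descent step at $s=0$. Once $\bm S\neq\emptyset$ is secured, the clash between \emph{the frozen upper endpoint originates from a failed Armijo test} and \emph{the common limit lies in} $\bm T$ closes the argument, the only analytic input being the closedness of $\bm T$, i.e. continuity of $f_i$, used in part~(ii). Notably, semismoothness is \textbf{not} needed for this lemma; it is reserved for the subsequent termination theorem, where the infiniteness of $\bm S$ proved here supplies the subsequence $t_s\downarrow t^*$ along which it is applied.
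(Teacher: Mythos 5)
Your proof is correct and follows essentially the same route as the paper's: reading the bisection updates off the algorithm for (i), monotone-bounded convergence plus the inductive invariant $t^l_s\in\bm T$ and closedness of $\bm T$ for (ii), and for (iii) securing $\bm S\neq\emptyset$ from the invocation condition $i\in\tilde{\mathcal{I}}$ (failure of the Armijo test at $\bar t=t_0$) before deriving the contradiction between the frozen upper endpoint $t^*=t_{\max\bm S}\notin\bm T$ and $t^*\in\bm T$. The only differences are cosmetic: you show $0\in\bm S$ directly where the paper argues from $\bm S=\emptyset$ by contradiction, and you get $t^u_s-t^l_s\to 0$ via geometric decay where the paper passes to the limit in \eqref{L2-1}; your closing observation that weak upper semismoothness is not needed here is also accurate.
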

\begin{proof}
	(i) This part follows immediately from the construction of the algorithm.
	
	(ii) It follows from \eqref{L2-1} that the sequence $\{t^u_s-t^l_s\}_{s}$ is decreasing and bounded by zero. Therefore, there exists $a\in\mathbb{R}$ such that $(t^u_s-t^l_s)\to a$ as $s\to\infty$. Letting $s$ approach infinity in inequality \eqref{L2-1}, one can conclude $0\leq a\leq(1-\eta)a$. Since $\eta\in(0,0.5)$, we deduce $a=0$. As $(t^u_s-t^l_s)\to 0$ when $s\to\infty$, inequalities~ \eqref{L2-2} ensure the existence of $t^*\in[0, \varepsilon]$ such that $t^l_s\uparrow t^*, t^u_s\downarrow t^*$ as $s\to\infty$. Moreover, since $t_s\in\{ t^l_{s+1}, t^u_{s+1} \}$ for all $s\geq 0$, we have $t_s\to t^*$ as $s\to\infty$.  It remains to show that $t^*\in\bm T$. We have  $t^l_s\in \bm T$ for all $s\geq 0$, i.e.,
	$$f_i(\bm x+t^l_s {\bm d})-f_i(\bm x)\leq -\beta\, t^l_s\,\lVert\boldsymbol{\xi}^*\rVert, \qquad \text{for all} \,\, s\geq 0. $$
	 Now, continuity of $f_i$ along with the fact that $t^l_s\uparrow t^*$ as $s\to\infty$  yields  
	$$f_i(\bm x+t^* {\bm d})-f_i(\bm x)\leq -\beta\, t^*\,\lVert\boldsymbol{\xi}^*\rVert, $$
	and therefore $t^*\in \bm T.$

	(iii) First, we show  $\bm S\neq\emptyset$. Assume by contradiction that $\bm{S}=\emptyset$, which means
	\begin{equation}\label{L2-3}
	 f_i(\bm x+t_s {\bm d})-f_i(\bm x)\leq -\beta\, t_s\,  \lVert\boldsymbol{\xi}^*\rVert, \qquad \text{for all} \,\, s\geq 0.
	\end{equation}
	In particular, for $s=0$, we have $t_0=\bar t$, and thus
	$$f_i(\bm x+\bar t {\bm d})-f_i(\bm x)\leq -\beta\, \bar t\,  \lVert\boldsymbol{\xi}^*\rVert, $$
	yielding $i\notin \tilde{\mathcal{I}}$ (see Algorithm \ref{Alg1}), which is a contradiction with the fact that $i\in\tilde{\mathcal{I}}$ in Algorithm \ref{Line-Search}.
	  Thus, $\bm S\neq \emptyset$. Next, we prove that $\bm S$ is indeed an infinite set. Suppose by contradiction that $\bm S$ is finite. Then, since $\bm {S}\neq\emptyset$ and $t^u_s\downarrow t^*$ as $s\to\infty$, one can find $\bar s\in\mathbb{N}$ such that 
	$$t^u_s=t^*, \qquad \text{for all} \,\, s\geq\bar s \quad \text{and} \quad t^u_s>t^*, \qquad \text{for all} \,\, s<\bar s. $$
	Consequently, $t^*=t^u_{\bar s}=t_{\bar s-1}$, and therefore
	$$f_i(\bm x+t_{\bar{s}-1} {\bm d})-f_i(\bm x)> -\beta\, t_{\bar{s}-1}\,  \lVert\boldsymbol{\xi}^*\rVert, $$
	which gives $t^*\notin \bm T$, contradicting the fact  $t^*\in \bm T$.
\end{proof}	
Now, we are in a position to state the principal convergence result for Algorithm \ref{Line-Search}. 

\begin{theorem}
In Algorithm \ref{Line-Search} 	suppose that objective function $f_i:\mathbb{R}^n\to\mathbb{R}$ is weakly upper semismooth. Then this algorithm  terminates after a finite number of iterations.
\end{theorem}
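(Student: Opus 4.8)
The plan is to argue by contradiction: suppose Algorithm \ref{Line-Search} never terminates, so that $s\to\infty$, and then use the structural facts collected in Lemma \ref{L2} to force the inner termination test $\boldsymbol{\xi}_s^T\bm d\geq -c\lVert\boldsymbol{\xi}^*\rVert$ to be satisfied at some iterate, which is the desired contradiction. The whole argument hinges on the infinite index set $\bm S=\{s:t^u_{s+1}=t_s\}$ from Lemma \ref{L2}(iii), together with the limit point $t^*\in\bm T$ furnished by Lemma \ref{L2}(ii).

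First I would pin down exactly what happens along $\bm S$. By construction of the first conditional block, $s\in\bm S$ precisely when the sufficient-decrease test fails at $t_s$, i.e.
$$f_i(\bm x+t_s{\bm d})-f_i(\bm x)> -\beta\, t_s\, \lVert\boldsymbol{\xi}^*\rVert.$$
Since $t_s=t^u_{s+1}\geq t^*$ and $t^*\in\bm T$ (so the decrease inequality holds at $t^*$), the failure at $t_s$ forces $t_s\neq t^*$, hence $h_s:=t_s-t^*>0$ for every $s\in\bm S$; moreover $h_s\to 0$ because $t_s\to t^*$. Subtracting the decrease inequality at $t^*$ from the failed inequality at $t_s$ and dividing by $h_s>0$ gives, for all $s\in\bm S$,
$$\frac{f_i(\bm z+h_s{\bm d})-f_i(\bm z)}{h_s}> -\beta\,\lVert\boldsymbol{\xi}^*\rVert,\qquad \bm z:=\bm x+t^*{\bm d}.$$

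Next I would bring in the semismoothness hypothesis. Passing to a monotone subsequence of $\{h_s\}_{s\in\bm S}$ so that $h_s\downarrow 0$ as demanded by the definition, and recalling that $\boldsymbol{\xi}_s\in\partial f_i(\bm x+t_s{\bm d})=\partial f_i(\bm z+h_s{\bm d})$, weak upper semismoothness of $f_i$ yields
$$\limsup_{s\in\bm S}\boldsymbol{\xi}_s^T{\bm d}\;\geq\;\liminf_{s\in\bm S}\frac{f_i(\bm z+h_s{\bm d})-f_i(\bm z)}{h_s}\;\geq\;-\beta\,\lVert\boldsymbol{\xi}^*\rVert,$$
where the last estimate is exactly the difference-quotient bound derived above, valid for every $s\in\bm S$. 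Because the parameters satisfy $\beta<c$ and $\boldsymbol{\xi}^*\neq\bm 0$, we have $-\beta\lVert\boldsymbol{\xi}^*\rVert>-c\lVert\boldsymbol{\xi}^*\rVert$, so the limsup lies strictly above $-c\lVert\boldsymbol{\xi}^*\rVert$. Consequently there is some $s\in\bm S$ with $\boldsymbol{\xi}_s^T{\bm d}\geq -c\lVert\boldsymbol{\xi}^*\rVert$, meaning the termination condition of the second conditional block is met at iteration $s$, contradicting the assumption that the algorithm does not terminate.

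I expect the main obstacle to be the careful bookkeeping that secures the strict positivity $h_s>0$ and the decay $h_s\to 0$ on $\bm S$, since these are precisely what legitimize both the division in the difference quotient and the application of the semismoothness inequality; the conceptual crux is the gap $\beta<c$, which converts the semismoothness lower bound $-\beta\lVert\boldsymbol{\xi}^*\rVert$ into a genuine violation of the not-yet-satisfied test $\boldsymbol{\xi}_s^T{\bm d}\geq -c\lVert\boldsymbol{\xi}^*\rVert$. The only technical nuisance is extracting a monotone $h_s\downarrow 0$ to match the exact hypotheses of the weak upper semismoothness definition, which is immediate once $h_s>0$ and $h_s\to 0$ are in hand.
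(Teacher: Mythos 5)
Your proposal is correct and follows essentially the same route as the paper's proof: contradiction via the infinite set $\bm S$ from Lemma \ref{L2}(iii), the limit $t^*\in\bm T$ from Lemma \ref{L2}(ii), the difference-quotient bound $-\beta\lVert\boldsymbol{\xi}^*\rVert<\bigl(f_i(\bm z+h_s\bm d)-f_i(\bm z)\bigr)/h_s$, weak upper semismoothness, and the gap $\beta<c$ to contradict the never-satisfied test $\boldsymbol{\xi}_s^T\bm d<-c\lVert\boldsymbol{\xi}^*\rVert$. If anything, you are slightly more careful than the paper, since you explicitly justify $h_s>0$ (the failure of the decrease test at $t_s$ versus $t^*\in\bm T$) and the monotone extraction $h_s\downarrow 0$ required by the definition, both of which the paper leaves implicit.
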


\begin{proof}\label{T1}
	By indirect proof, assume that Algorithm \ref{Line-Search} does not terminate. Let $\bm {S}$ be as defined in part (iii) of Lemma \ref{L2}. Then $\bm{S}$ is infinite and 
	\begin{equation}\label{T1-1}
	f_i(\bm x+t_s {\bm d})-f_i(\bm x)>-\beta t_s \lVert \boldsymbol{\xi}^*\rVert, \quad \text{for all} \,\, s\in\bm{S}.
	\end{equation}
	By part (ii) of Lemma \ref{L2}, we know that $t_s\to t^*$ as $s\to\infty$ with $t^*\in \bm T$, i.e.,
	\begin{equation}\label{T1-2}
	f_i(\bm x+t^* {\bm d})-f_i(\bm x)\leq-\beta t^* \lVert \boldsymbol{\xi}^*\rVert.
	\end{equation}
	Plugging the latter inequality into \eqref{T1-1}, we obtain
	\begin{equation}\label{T1-3}
	f_i(\bm x+t_s {\bm d})-f_i(\bm x+t^* {\bm d})>-\beta \lVert\boldsymbol{\xi}^*\rVert(t_s-t^*), \quad \text{for all}\,\, s\in\bm{S}.
	\end{equation}
	Define  $\bm z:=\bm x+t^*{\bm d}$ and $h_s:=t_s-t^*>0$, for all $s\in\bm{S}$. Then \eqref{T1-3} can be rewritten as
	\begin{equation}\label{T1-4}
	-\beta\lVert\boldsymbol{\xi}^*\rVert< \frac{f_i(\bm z+h_s {\bm d})-f_i(\bm z)}{h_s}, \quad \text{for all}\,\, s\in\bm{S}.
	\end{equation}
	Since the objective $f_i$ is weakly upper semismooth, inequality \eqref{T1-4} implies
	\begin{equation}\label{T1-5}
	-\beta\lVert\boldsymbol{\xi}^*\rVert\leq \liminf_{s\xrightarrow{\bm{S}}\infty} \frac{f_i(\bm z+h_s {\bm d})-f_i(\bm z)}{h_s}\leq \limsup_{s\xrightarrow{\bm{S}}\infty} \boldsymbol{\xi}_s^T {\bm d}. 
	\end{equation}
	On the other hand, since $s\to\infty$ in Algorithm \ref{Line-Search}, the stopping criterion of this algorithm is never met, and hence   
	$$\boldsymbol{\xi}_s^T {\bm d}<-c\lVert\boldsymbol{\xi}^*\rVert, \quad \text{for all} \,\, s\in\bm{S}, $$
	and consequently
	$$ \limsup_{s\xrightarrow{\bm{S}}\infty} \boldsymbol{\xi}_s^T {\bm d}\leq -c\lVert\boldsymbol{\xi}^*\rVert <-\beta\lVert\boldsymbol{\xi}^*\rVert,$$
	which is a contradiction with \eqref{T1-5}. 
\end{proof}
\section{Finding a Clarke substationary point}\label{Sec4}
The main goal of this section is to employ the provided tools in Section \ref{Sec3} in order to locate a Clarke substationary point for multiobjective minimization problem \eqref{Main-Problem}. For this purpose, the following concept of substationarity plays a crucial role.
\begin{definition}[\cite{bagirov2012,bagirov2010,Multi-sub4}]\label{Def1}
	Suppose $\varepsilon>0$ and $\delta>0$ are given. A point $\bm x\in\mathbb{R}^n$ is called a $\big(\delta, \mathcal{S}_\varepsilon(\bm x)\big)$-substationary point for problem \eqref{Main-Problem} if 
	$$\min\big\{ \lVert \boldsymbol{\xi}\rVert \,\, : \,\, \boldsymbol{\xi}\in\text{\rm \texttt{conv}}\{ \cup_{i=1}^p\bm{\mathcal{S}}_{\varepsilon,i}(\bm x)\}  \big\}\leq \delta.$$
\end{definition}
\subsection{Computing a $\big(\delta, \mathcal{S}_\varepsilon(\bm x)\big)$-substationary point }
Based on the discussions provided in the previous section, a procedure  for finding a $\big(\delta, \mathcal{S}_\varepsilon(\bm x)\big)$-substationary point is presented in Algorithm \ref{Alg3}. Next, we aim to show that Algorithm \ref{Alg3} terminates after finite number of iterations. To this end, set
\begin{equation}\label{Index-set}
\mathcal{K}:=\{k\in\mathbb{N}_0 \,\, : \,\, {t}_k>0 \,\, \text{in Algorithm \ref{Alg3}}  \}.
\end{equation}

\begin{algorithm}
	\caption{Finding a $\big(\delta, \mathcal{S}_\varepsilon(\bm x)\big)$-Substationary Point}
	\label{Alg3}
	\hspace*{\algorithmicindent}\textbf{Inputs:} Objective functions $\{f_i\}_{i=1}^p$, starting point $\bm x_0\in\mathbb{R}^n$, radius $\varepsilon\in(0,1)$, substationarity tolerance $\delta>0$, and Armijo coefficient $\beta\in(0,1)$.  \\
	\hspace*{\algorithmicindent}\textbf{Parameter:} Lower bound $\bar t\in(0, \varepsilon).$ \\
	\hspace*{\algorithmicindent}\textbf{Output:} A $(\delta, \mathcal{S}_\varepsilon(\bm x))$-substationary point $\bm x\in\mathbb{R}^n$.\\
	
	\hspace*{\algorithmicindent}\textbf{Function:} $\bm x$\,\,=\,\,\texttt{DS-SP}\,($\{f_i\}_{i=1}^p$, $\bm x_0, \varepsilon, \delta$, $\beta$) 
	
	\begin{algorithmic}[1]
		\STATE{\textbf{Initialization:} Compute $\boldsymbol{\xi}_{0,i}\in\partial f_i(\bm x_0)$, and set  $\mathcal{S}^0_{\varepsilon,i}(\bm x_0):=\{\boldsymbol{\xi}_{0,i}\}$, for all $i\in\{1,\ldots,p\}$. Set $k:=0$ ;}
		\WHILE{ true }
		\STATE{Set $\boldsymbol{\xi}^*_k:=\arg\min\big\{\lVert \boldsymbol{\xi}\lVert \,\, : \,\, \boldsymbol{\xi}\in\text{\rm \texttt{conv}}\{ \cup_{i=1}^p\bm{\mathcal{S}}^k_{\varepsilon,i}(\bm x_k)\}   \big\}$ ;}
		\IF{$\lVert \boldsymbol{\xi}^*_k\rVert\leq \delta,$}
		\RETURN {$\bm x_k$ as a $(\delta, \mathcal{S}_\varepsilon(\bm x_k))$-substationary point and \textbf{STOP} ; }
		\ENDIF
		\STATE{Set  $\bm d_k:=-\boldsymbol{\xi}^*_k/\lVert\boldsymbol{\xi}^*_k\rVert$ ; }
		\STATE{Set $\{t_k, \tilde{\mathcal{I}_k} \}:=$ \texttt{LBLS} ($\{f_i\}_{i=1}^p, \bm x_k, \bm d_k, \beta, \bar t$\,)  ;}
		
		\IF{$t_k>0,$}
		\STATE {Set $\bm x_{k+1}:=\bm x_k+ t_k \bm d_k$ ;}
		\STATE{Compute $\boldsymbol{\xi}_{k+1,i}\in\partial f_i(\bm x_{k+1})$, for all $i\in\{1,\ldots,p\}$ ; }
		\STATE{Set  $\mathcal{S}^{k+1}_{\varepsilon,i}(\bm x_{k+1}):=\{\boldsymbol{\xi}_{k+1,i}\}$, for all $i\in\{1,\ldots,p\}$ ; }
		\ENDIF
		\IF{$t_k=0,$} 
		\STATE { Set $\boldsymbol{\xi}_{k+1, i}:=\texttt{FES}(f_i, \bm x_{k}, \bm d_k, \varepsilon)$, for all $i\in\tilde{\mathcal{I}}_k$  ;}
		\STATE { Set $\boldsymbol{\xi}_{k+1, i}:=\boldsymbol{\xi}_{k,i}$, for all $i\notin\tilde{\mathcal{I}}_k$  ;}
		\STATE{Set $\bm x_{k+1}:=\bm x_k$ ;}   
		\STATE{Set $\mathcal{S}^{k+1}_{\varepsilon, i}(\bm x_{k+1}):= \mathcal{S}^k_{\varepsilon, i}(\bm x_{k})\cup\{\boldsymbol{\xi}_{k+1,i} \}$, for all $i\in\{1,\ldots,p\}$   ;}
		\ENDIF
		\STATE{Set $k:=k+1$ ;}
		
		\ENDWHILE
	\end{algorithmic}
	\hspace*{\algorithmicindent}\textbf{ End Function}
\end{algorithm}
We also make the following assumption about the multiobjective problem~\eqref{Main-Problem}.
\begin{assumption}\label{Assumption 1}
	Objective functions $\{f_i\}_{i=1}^p$ are weakly upper semismooth. Moreover, for some $j\in\{1,\ldots,p\}$, the sublevel set
	$$lev_{f_{j}(\bm x_0)}(f_{j}):=\{ \bm x\in\mathbb{R}^n \,\, : \,\, f_{j}(\bm x)\leq f_{j}({\bm x_0})   \},$$
	is bounded.
\end{assumption}
We start with the following lemma. 
\begin{lemma}\label{L3}
	Suppose that Assumption \ref{Assumption 1} holds. If Algorithm \ref{Alg3} does not terminate, then the index set $\mathcal{K}$ is finite.
\end{lemma}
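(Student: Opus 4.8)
The plan is to argue by contradiction using a monotone descent argument applied to the single objective $f_j$ whose sublevel set is bounded by Assumption \ref{Assumption 1}. Throughout, I would use the non-termination hypothesis in the form $\lVert \boldsymbol{\xi}^*_k \rVert > \delta$ for all $k \in \mathbb{N}_0$: indeed, if $\lVert \boldsymbol{\xi}^*_k \rVert \leq \delta$ occurred, the stopping test in Algorithm \ref{Alg3} would trigger and the algorithm would terminate.

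First I would quantify the decrease produced by a serious step. For $k \in \mathcal{K}$ we have $t_k > 0$, which corresponds to Case I of Algorithm \ref{Alg1}; this guarantees $t_k \geq \bar t$ together with the sufficient decrease condition \eqref{Suff-decrease}, i.e. $f_i(\bm x_{k+1}) - f_i(\bm x_k) \leq -\beta t_k \lVert \boldsymbol{\xi}^*_k \rVert$ for every $i = 1, \ldots, p$. Combining $t_k \geq \bar t > 0$ with $\lVert \boldsymbol{\xi}^*_k \rVert > \delta$ yields the \emph{uniform} bound
\[ f_i(\bm x_{k+1}) - f_i(\bm x_k) \leq -\beta \bar t \delta < 0, \qquad i = 1, \ldots, p, \ \ k \in \mathcal{K}. \]
For a null step ($k \notin \mathcal{K}$, i.e. $t_k = 0$) the update rule sets $\bm x_{k+1} := \bm x_k$, so $f_i(\bm x_{k+1}) = f_i(\bm x_k)$ for all $i$. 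Hence the scalar sequence $\{f_j(\bm x_k)\}_k$ is non-increasing: it drops by at least the fixed amount $\beta \bar t \delta$ at each index in $\mathcal{K}$ and stays constant otherwise.

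Next I would use this monotonicity to confine the iterates. Since $f_j(\bm x_k) \leq f_j(\bm x_0)$ for all $k$, every iterate lies in $lev_{f_{j}(\bm x_0)}(f_{j})$. Continuity of the locally Lipschitz function $f_j$ makes this sublevel set closed, and Assumption \ref{Assumption 1} makes it bounded, hence compact; therefore $f_j$ attains a finite minimum $m_j$ on it, and $f_j(\bm x_k) \geq m_j$ for every $k$.

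Finally I would close the contradiction. Writing $\mathcal{K} = \{k_1 < k_2 < \cdots\}$ and telescoping the decrease along serious steps (the values being unchanged across the intervening null steps) gives $f_j(\bm x_{k_\ell + 1}) \leq f_j(\bm x_0) - \ell\, \beta \bar t \delta$. If $\mathcal{K}$ were infinite, letting $\ell \to \infty$ would force $f_j(\bm x_k) \to -\infty$, contradicting $f_j(\bm x_k) \geq m_j$. Consequently $\mathcal{K}$ is finite. The only delicate point is ensuring that the per-serious-step decrease is genuinely bounded away from zero; this is precisely where the lower bound $\bar t$ on admissible step sizes and the non-termination bound $\lVert \boldsymbol{\xi}^*_k \rVert > \delta$ both enter, and it is what makes boundedness of a single objective's sublevel set (rather than of all objectives) sufficient.
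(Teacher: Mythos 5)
Your proposal is correct and follows essentially the same argument as the paper: contradiction via the non-termination bound $\lVert \boldsymbol{\xi}^*_k\rVert>\delta$, a uniform per-serious-step decrease $\beta\,\bar t\,\delta$ from $t_k\geq\bar t$, constancy of $f_j$ at null steps, and a telescoping sum driving $f_j(\bm x_k)\to-\infty$, contradicting the lower bound obtained from compactness of the sublevel set. The only cosmetic difference is that you minimize $f_j$ over the sublevel set and note the iterates remain there, while the paper states the global minimum of $f_j$ is finite; these are interchangeable.
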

\begin{proof}
	Since $f_{j}$ is continuous,  $lev_{f_{j}(\bm x_0)}(f_{j})$ is closed, and hence, by assumption, $lev_{f_{j}(\bm x_0)}(f_{j})$ is compact. Therefore 
	\begin{equation}\label{L3-0}
	f^*_{j}:=\min\{f_{j}(\bm x) \,\, : \,\, \bm x\in\mathbb{R}^n \}> -\infty.
	\end{equation}
	By indirect proof, suppose that $\mathcal K$ is infinite. As Algorithm \ref{Alg3} does not terminate, we conclude
	\begin{equation}\label{L3-1}
	\lVert \boldsymbol{\xi}^*_k\rVert>\delta, \quad \text{for all}\,\, k.
	\end{equation}
	 For each $k\in\mathcal{K}$, we have ${t}_k>0$ and thus
	\begin{equation*}
	f_i(\bm x_{k+1})-f_i(\bm x_k)\leq -\beta\,  t_{k} \,\lVert\boldsymbol{\xi}^*_k\rVert, \quad \text{for all} \,\, i=1,\ldots,p \,\,\, \text{and} \,\,\, k\in\mathcal{K}. 
	\end{equation*}
	By construction of Algorithm \ref{Alg1}, we have $t_{k}\geq \bar t>0$, for all $k\in\mathcal{K}$. Thus, in virtue of \eqref{L3-1}, the latter inequality implies
	\begin{equation*}
	f_i(\bm x_{k+1})-f_i(\bm x_k)\leq -\beta\, \bar t\, \delta, \quad \text{for all} \,\,i=1,\ldots,p \,\,\, \text{and} \,\,\, k\in\mathcal{K}.
	\end{equation*}
	Furthermore, for any $k\in\mathbb{N}_0\setminus\mathcal{K}$, we have ${t_k}=0$ and hence
	\begin{equation*}
	f_i(\bm x_{k+1})=f_i(\bm x_k), \quad \text{for all} \,\, i=1,\ldots,p \,\,\, \text{and} \,\,\, k\in\mathbb{N}_0\setminus\mathcal K.
	\end{equation*}
	In particular, for $i=j$, one can write
	\begin{align}
	& f_j(\bm x_{k+1})-f_j(\bm x_k)\leq -\beta\, \bar t\, \delta, \quad \text{for all} \,\, k\in\mathcal{K}, \label{L3-2} \\&
	f_j(\bm x_{k+1})=f_j(\bm x_k), \quad \text{for all} \,\, k\in\mathbb{N}_0\setminus\mathcal K. \label{L3-3}
	\end{align}
	Using \eqref{L3-2} and \eqref{L3-3} inductively, we arrive at
	\begin{equation}\label{L3-4}
	f_j(\bm x_{k+1})\leq f_j(\bm x_0)-\sum_{\substack{q\in\mathcal{K}\\ q\leq k+1}} \beta\, \bar t \,\delta.
	\end{equation}
	As $\mathcal{K}$ is infinite, $\sum_{\substack{q\in\mathcal{K}\\ q\leq k+1}} \beta\, \bar t\, \delta\to\infty$ as $k\to\infty$. Thus, \eqref{L3-4} gives $f_j(\bm x_k)\to-\infty$ as $k\to\infty$, which contradicts \eqref{L3-0}.
\end{proof}
Our  main result about Algorithm \ref{Alg3} is given in the next theorem.  

\begin{theorem}\label{T2}
	Suppose that Assumption \ref{Assumption 1} holds. Then Algorithm \ref{Alg3} terminates in finite number of iterations.
\end{theorem}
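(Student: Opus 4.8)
The plan is to argue by contradiction: assume Algorithm~\ref{Alg3} generates infinitely many outer iterations. One should first note that each individual iteration genuinely completes, since the inner call to \texttt{LBLS} (Algorithm~\ref{Alg1}) sweeps a finite list of trial step sizes, and under Assumption~\ref{Assumption 1} each inner call to \texttt{FES} (Algorithm~\ref{Line-Search}) terminates finitely by the finite-termination result already established for that algorithm. Hence non-termination really means an infinite index sequence, and throughout I may use the non-termination condition $\lVert\boldsymbol{\xi}^*_k\rVert>\delta$ for all $k$ (otherwise the stopping test in Definition~\ref{Def1} would trigger).

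First I would invoke Lemma~\ref{L3}: under Assumption~\ref{Assumption 1}, non-termination forces the serious-step set $\mathcal{K}$ of \eqref{Index-set} to be finite. Consequently there is an index $K_0\in\mathbb{N}_0$ with $t_k=0$ for every $k\geq K_0$. Since a null step leaves the iterate unchanged, $\bm x_k=\bm x:=\bm x_{K_0}$ for all $k\geq K_0$, and the stored collections only grow, $\bm{\mathcal{S}}^{k}_{\varepsilon,i}(\bm x)\subseteq\bm{\mathcal{S}}^{k+1}_{\varepsilon,i}(\bm x)$; thus the polytopes $G_k:=\texttt{conv}\{\cup_{i=1}^p\bm{\mathcal{S}}^k_{\varepsilon,i}(\bm x)\}$ are nested, $G_k\subseteq G_{k+1}$. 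Every subgradient produced in this tail lies in the compact set $\cup_{i=1}^p\partial_\varepsilon f_i(\bm x)$ (the \texttt{FES} outputs are elements of $\partial_\varepsilon f_i(\bm x)$ by construction), so there is a uniform bound $C:=\max_i\max\{\lVert\boldsymbol{\xi}\rVert:\boldsymbol{\xi}\in\partial_\varepsilon f_i(\bm x)\}<\infty$, and in particular $\lVert\boldsymbol{\xi}^*_k\rVert\leq C$ for all $k\geq K_0$.

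The heart of the argument, and the step I expect to be the main obstacle, is to show each null step in this tail forces a uniformly bounded decrease of $\lVert\boldsymbol{\xi}^*_k\rVert^2$. For $k\geq K_0$ we are in Case~II, so $\tilde{\mathcal{I}}_k\neq\emptyset$, and for any $\bar i\in\tilde{\mathcal{I}}_k$ the subgradient $\boldsymbol{\eta}:=\boldsymbol{\xi}_{k+1,\bar i}$ returned by \texttt{FES} satisfies \eqref{Criterion1}, i.e. $\boldsymbol{\eta}^T\bm d_k\geq -c\lVert\boldsymbol{\xi}^*_k\rVert$, which rearranges (using $\bm d_k=-\boldsymbol{\xi}^*_k/\lVert\boldsymbol{\xi}^*_k\rVert$) to $\boldsymbol{\eta}^T\boldsymbol{\xi}^*_k\leq c\lVert\boldsymbol{\xi}^*_k\rVert^2$. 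Because $\boldsymbol{\eta}\in G_{k+1}$ and $\boldsymbol{\xi}^*_k\in G_k\subseteq G_{k+1}$, the segment $\{(1-\lambda)\boldsymbol{\xi}^*_k+\lambda\boldsymbol{\eta}:\lambda\in[0,1]\}$ lies in $G_{k+1}$, so I can bound $\lVert\boldsymbol{\xi}^*_{k+1}\rVert^2$ by the minimum over $\lambda\in[0,1]$ of $\phi(\lambda):=\lVert(1-\lambda)\boldsymbol{\xi}^*_k+\lambda\boldsymbol{\eta}\rVert^2=\lVert\boldsymbol{\xi}^*_k\rVert^2+2\lambda\,\boldsymbol{\xi}_k^{*T}(\boldsymbol{\eta}-\boldsymbol{\xi}^*_k)+\lambda^2\lVert\boldsymbol{\eta}-\boldsymbol{\xi}^*_k\rVert^2$. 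Here $\boldsymbol{\xi}_k^{*T}(\boldsymbol{\eta}-\boldsymbol{\xi}^*_k)\leq-(1-c)\lVert\boldsymbol{\xi}^*_k\rVert^2<0$ by the displayed inequality, while $\lVert\boldsymbol{\eta}-\boldsymbol{\xi}^*_k\rVert^2\leq 4C^2$. Choosing $\lambda^*=(1-c)\lVert\boldsymbol{\xi}^*_k\rVert^2/(4C^2)$, which lies in $(0,1)$ since $\lVert\boldsymbol{\xi}^*_k\rVert\leq C$ gives $\lambda^*\leq(1-c)/4<1$, yields
\[
\lVert\boldsymbol{\xi}^*_{k+1}\rVert^2\leq\phi(\lambda^*)\leq\lVert\boldsymbol{\xi}^*_k\rVert^2-\frac{(1-c)^2\lVert\boldsymbol{\xi}^*_k\rVert^4}{4C^2}.
\]

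Finally I would combine this estimate with non-termination. For all $k\geq K_0$ we have $\lVert\boldsymbol{\xi}^*_k\rVert>\delta$, so the per-step decrease is at least the fixed positive constant $\theta:=(1-c)^2\delta^4/(4C^2)$, giving $\lVert\boldsymbol{\xi}^*_{k+1}\rVert^2\leq\lVert\boldsymbol{\xi}^*_k\rVert^2-\theta$ for every $k\geq K_0$. Iterating this bound drives $\lVert\boldsymbol{\xi}^*_k\rVert^2$ below $\delta^2$ (indeed below $0$) after finitely many steps, contradicting $\lVert\boldsymbol{\xi}^*_k\rVert>\delta$. This contradiction rules out an infinite null-step tail, so Algorithm~\ref{Alg3} must terminate. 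The delicate points to check carefully are the feasibility $\lambda^*\in(0,1)$ (secured by $\lVert\boldsymbol{\xi}^*_k\rVert\leq C$) and the justification that $G_k\subseteq G_{k+1}$ together with $\boldsymbol{\eta}\in G_{k+1}$ legitimizes the segment bound; the monotonicity $\lVert\boldsymbol{\xi}^*_{k+1}\rVert\leq\lVert\boldsymbol{\xi}^*_k\rVert$ across the tail then comes for free from the nesting.
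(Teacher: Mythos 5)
Your proposal is correct and takes essentially the same approach as the paper's proof: contradiction via Lemma \ref{L3} to isolate an infinite null-step tail at a fixed point, then the \texttt{FES} output condition \eqref{Criterion1} combined with the segment (convex-combination) bound on the minimum-norm element of the nested polytopes to force a uniform per-step decrease of $\lVert\boldsymbol{\xi}^*_k\rVert^2$. The only harmless deviations are cosmetic: you finish with an additive decrement $\theta$ telescoped to a finite contradiction, whereas the paper derives the multiplicative contraction $\lVert\boldsymbol{\xi}^*_{k+1}\rVert^2\leq\alpha\lVert\boldsymbol{\xi}^*_k\rVert^2$ with $\alpha\in(0,1)$ and passes to the limit, introducing $\bar C:=\max\{C,\delta\}$ where your argument works directly with $C$.
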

\begin{proof}
By indirect proof, suppose that Algorithm \ref{Alg3} does not terminate, i.e., $k\to\infty$. Therefore
\begin{equation}\label{T2-0}
\lVert \boldsymbol{\xi}_k^*\lVert>\delta, \quad \text{for all} \,\, k.
\end{equation}
	Let $\mathcal{K}$ be as defined in \eqref{Index-set}. Then, in view of Lemma \ref{L3}, $\mathcal{K}$ is a finite set. In the case that $\mathcal{K}\neq\emptyset$, set
	$$\bar{k}:=\max\{k \,\, : \,\, k\in\mathcal{K}   \}.$$
	Otherwise, we set $\bar{k}:=0$. Define $\bar{\bm x}:=\bm x_{\bar k+1}$. Then, for any $k>\bar k$, we have ${t}_k=0$, and consequently
	  $$\bar{\bm x}=\bm x_{k}, \quad \text{for all}\,\, k>\bar k.$$ 
	  Furthermore, for any $k>\bar k$, the following collection of new subgradients is computed:
	  $$\boldsymbol{\chi}_k:=\{\boldsymbol{\xi}_{k+1,i} \,\, : \,\, \boldsymbol{\xi}_{k+1,i}=\texttt{FES}(f_i, \bar{\bm x}, \bm d_k, \varepsilon)\in\partial_\varepsilon f_i(\bar{\bm x}), \,\, i\in\tilde{\mathcal{I}}_k  \}.$$
	  By construction of Algorithm \ref{Line-Search}, each element of $\boldsymbol{\chi}_k$ satisfies 
\begin{equation*}
\boldsymbol{\xi}_{k+1,i}^T{\bm d}_k\geq -c\lVert \boldsymbol{\xi}^*_k\lVert.
\end{equation*}
Since  ${\bm d}_k=-\boldsymbol{\xi}^*_k/\lVert \boldsymbol{\xi}^*_k\lVert$, the above inequality is represented as
\begin{equation}\label{T2-2}
\boldsymbol{\xi}_{k+1,i}^T\boldsymbol{\xi}^*_k\leq c\lVert \boldsymbol{\xi}^*_k\lVert^2.
\end{equation}
For all $k>\bar k$, we have $\tilde{\mathcal{I}}_k\neq\emptyset$.  Let $i_k\in\tilde{\mathcal{I}}_k$ be arbitrary. Then,
{$\boldsymbol{\xi}_{k+1,i_k}\in\boldsymbol{\chi}_k$}, and hence
\begin{equation}\label{T2-3}
\boldsymbol{\xi}_{k+1,i_k}\in \mathcal{S}^{k+1}_{\varepsilon, i_k}(\bm x_{k+1})=\mathcal{S}^{k+1}_{\varepsilon, i_k}(\bar{\bm x})\subset\partial_\varepsilon f_{i_k}(\bar{\bm x})\subset\texttt{conv}\{\cup_{i=1}^p\partial_\varepsilon f_i(\bar{\bm x})\}.
\end{equation}
It is  also noted that, for all $k>\bar k$,
\begin{equation}\label{T2-3'}
\boldsymbol{\xi}^*_k\in\texttt{conv}\{\cup_{i=1}^p \mathcal{S}^k_{\varepsilon,i}({\bm x_k}) \}=\texttt{conv}\{\cup_{i=1}^p \mathcal{S}^k_{\varepsilon,i}(\bar{\bm x}) \}\subset \texttt{conv}\{\cup_{i=1}^p\partial_\varepsilon f_i(\bar{\bm x})\}.
\end{equation}
  Boundedness of $\texttt{conv}\{\cup_{i=1}^p\partial_\varepsilon f_i(\bar{\bm x})\}$  implies
$$C:=\sup\{ \lVert\boldsymbol{\xi}\rVert\,\,:\,\, \boldsymbol{\xi}\in\texttt{conv}\{\cup_{i=1}^p\partial_\varepsilon f_i(\bar{\bm x})\} \}<\infty.$$
Set $\bar C:=\max\{C, \delta\}$. Therefore, it follows from \eqref{T2-3} and \eqref{T2-3'} that
\begin{equation}\label{T2-4}
\lVert \boldsymbol{\xi}_{k+1, i_k}-\boldsymbol{\xi}^*_k\rVert \leq 2\bar C, \quad \text{for all} \,\, k>\bar k.
\end{equation}
Now, for any $t\in(0,1)$ and $k>\bar k$, one can write
\begin{align}\label{T2-5}
\lVert \boldsymbol{\xi}^*_{k+1}\rVert^2&\leq \lVert t \boldsymbol{\xi}_{k+1,i_k} + (1-t) \boldsymbol{\xi}^*_k\lVert^2\nonumber \\&
=t^2\lVert \boldsymbol{\xi}_{k+1,i_k}-\boldsymbol{\xi}^*_k\lVert^2+2t(\boldsymbol{\xi}^*_k)^T (\boldsymbol{\xi}_{k+1,i_k}-\boldsymbol{\xi}^*_k)+\lVert \boldsymbol{\xi}^*_k\rVert^2.
\end{align}
In view of \eqref{T2-2} and \eqref{T2-4}, one can continue \eqref{T2-5} as
\begin{align}
\lVert  \boldsymbol{\xi}^*_{k+1}\rVert^2&\leq 4t^2\bar C^2+2tc\lVert \boldsymbol{\xi}^*_k\lVert^2-2t \lVert \boldsymbol{\xi}^*_k\lVert^2+ \lVert \boldsymbol{\xi}^*_k\lVert^2 \nonumber\\&
= 4t^2\bar C^2+ \left(1-2t(1-c)\right) \lVert \boldsymbol{\xi}^*_k\lVert^2 \nonumber \\&
=:\varphi(t), \label{T2-6}
\end{align}
for all $t\in(0,1)$. It is easy to see that
 $$t^*:=2(1-c)\lVert \boldsymbol{\xi}^*_k\lVert^2/8\bar C^2\in(0,1),$$
  is the minimum point of  $\varphi(t)$ and 
\begin{equation*}
\varphi(t^*)=\left(1-\frac{(1-c)^2\lVert \boldsymbol{\xi}^*_k\lVert^2}{4\bar C^2} \right) \lVert \boldsymbol{\xi}^*_k\lVert^2.
\end{equation*}
In virtue of \eqref{T2-0}, the above equality gives
\begin{equation}\label{T2-6'}
\varphi(t^*)\leq\left(1-\frac{(1-c)^2\delta^2}{4\bar C^2} \right) \lVert \boldsymbol{\xi}^*_k\lVert^2.
\end{equation}
Since $\delta\leq \bar C$ and $c\in(0,1)$, we get 
$$\alpha:=1-\frac{(1-c)^2\delta^2}{4\bar C^2}\in(0,1).$$
 Next, it follows from \eqref{T2-6} and \eqref{T2-6'} that
\begin{equation}\label{T2-7}
0\leq \lVert \boldsymbol{\xi}^*_{k+1}\rVert^2 \leq \varphi(t^*)\leq \alpha \lVert\boldsymbol{\xi}^*_k\lVert^2, \quad \text{for all} \,\, k>\bar k,
\end{equation}
yielding the sequence $\{\lVert \boldsymbol{\xi}^*_{k}\rVert^2 \}_{k>\bar k}$ is bounded from below and decreasing. Thus, it converges. Let $\lVert \boldsymbol{\xi}^*_{k}\rVert^2 \to A$ as $k\to\infty$. Letting $k\to\infty$  in inequality \eqref{T2-6}, we obtain $0\leq A\leq\alpha A$. Since $\alpha\in(0,1)$, we deduce $A=0$. Consequently, $\lVert \boldsymbol{\xi}^*_{k}\rVert^2 \to 0$ as $k\to\infty$, which contradicts \eqref{T2-0}.

\end{proof}

\subsection{Computing  a Clarke substationary point}\label{Sec5}
In this section, for the given decreasing sequences  $\delta_\nu\downarrow 0$ and $\varepsilon_\nu\downarrow 0$, we seek a Clarke substationary point of problem \eqref{Main-Problem} by generating a sequence of $\big(\delta_\nu, \mathcal{S}_{\varepsilon_\nu}(\cdot)\big)$-substationary points. Such a sequence is simply generated by Algorithm \ref{Alg4}.
\begin{algorithm}
	\caption{Computation of a Clarke stationary point}
	\label{Alg4}
	\hspace*{\algorithmicindent}\textbf{Inputs:} Objective functions $\{f_i\}_{i=1}^p$, starting point $\bm x_0\in\mathbb{R}^n$, positive sequences $\delta_\nu\downarrow 0$ and $\varepsilon_\nu\downarrow 0$, and optimality tolerance $\rho>0$. \\
	\hspace*{\algorithmicindent}\textbf{Parameter:} Armijo coefficient $\beta>0$. \\
	\hspace*{\algorithmicindent}\textbf{Output:} A point $\bm x\in\mathbb{R}^n$ as an approximation of a Clarke substationary point.\\
	
	
	\begin{algorithmic}[1]
		\STATE{\textbf{Initialization:} Set $\nu:=0$ ;}
		\WHILE{ true }
		\STATE{Set $\bm x_{\nu+1}:=\,\texttt{DS-SP}\,(\{f_i\}_{i=1}^p, \bm x_\nu, \varepsilon_\nu, \delta_\nu, \beta)$ ;}
		\IF{$ \delta_\nu < \rho,$ \rm{\textbf{and}} $ \varepsilon_\nu < \rho,$}
		\RETURN {$\bm x_\nu$ as an approximation of a Clarke substationary point and \textbf{Stop} ; }
		\ENDIF
		\STATE{Set $\nu:=\nu+1$ ;}
		
		\ENDWHILE
	\end{algorithmic}

\end{algorithm}

To study the asymptotic behavior of Algorithm \ref{Alg4}, one may assume $\rho=0$. In this case, the algorithm generates the infinite sequence $\{\bm x_\nu\}_\nu$. In the next theorem, it is proved that any accumulation point of the sequence $\{\bm x_\nu\}_\nu$ is Clarke substationary for problem \eqref{Main-Problem}.

\begin{theorem}\label{T3}
	Suppose that  Assumption \ref{Assumption 1} holds. If $\rho=0$ in Algorithm \ref{Alg4}, then any accumulation point of the sequence $\{\bm x_\nu\}_\nu$ generated by this algorithm is Clarke substationary for problem \eqref{Main-Problem}. 
\end{theorem}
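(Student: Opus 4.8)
The plan is to regard Algorithm \ref{Alg4} as producing outer iterates $\bm x_{\nu+1}$ that are substationary with \emph{vanishing} tolerances, and then to pass to the limit along a convergent subsequence using the upper semicontinuity of the subdifferential. First I would check the sequence is well defined: each call to \texttt{DS-SP} at stage $\nu$ starts from $\bm x_\nu$, and since serious steps decrease every objective while null steps leave the iterate fixed, $f_j$ is nonincreasing, so $f_j(\bm x_{\nu+1})\le f_j(\bm x_\nu)\le f_j(\bm x_0)$ for all $\nu$. Hence every iterate lies in the bounded set $lev_{f_j(\bm x_0)}(f_j)$, so Assumption \ref{Assumption 1} holds for each inner subproblem and Theorem \ref{T2} guarantees that every \texttt{DS-SP} call terminates; the sequence $\{\bm x_\nu\}$ is therefore well defined and bounded. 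By construction $\bm x_{\nu+1}$ is a $\big(\delta_\nu,\mathcal{S}_{\varepsilon_\nu}(\bm x_{\nu+1})\big)$-substationary point, so Definition \ref{Def1} together with $\bm{\mathcal{S}}_{\varepsilon_\nu,i}(\bm x_{\nu+1})\subseteq\partial_{\varepsilon_\nu}f_i(\bm x_{\nu+1})$ yields, for each $\nu$, a vector $\boldsymbol{\xi}^*_\nu\in\texttt{conv}\{\cup_{i=1}^p\partial_{\varepsilon_\nu}f_i(\bm x_{\nu+1})\}$ with $\lVert\boldsymbol{\xi}^*_\nu\rVert\le\delta_\nu$.

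Next I would fix an accumulation point $\bar{\bm x}$ and a subsequence $\bm x_{\nu_k}\to\bar{\bm x}$. For $\nu_k\ge 1$ the point $\bm x_{\nu_k}$ carries the certificate $\boldsymbol{\xi}^*_{\nu_k-1}\in\texttt{conv}\{\cup_{i=1}^p\partial_{\varepsilon_{\nu_k-1}}f_i(\bm x_{\nu_k})\}$ with $\lVert\boldsymbol{\xi}^*_{\nu_k-1}\rVert\le\delta_{\nu_k-1}$, and since $\nu_k\to\infty$ both $\delta_{\nu_k-1}\to 0$ and $\varepsilon_{\nu_k-1}\to 0$. Setting $S:=\texttt{conv}\{\cup_{i=1}^p\partial f_i(\bar{\bm x})\}$, which is a compact convex set (a finite union of compact convex sets, whose convex hull is compact in $\mathbb{R}^n$), the claim $\bm 0\in S$ reduces to showing $\operatorname{dist}(\bm 0,S)=0$.

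The crux is a shrinking estimate for $\partial_{\varepsilon_{\nu_k-1}}f_i(\bm x_{\nu_k})$ as $k\to\infty$. Fix $\gamma>0$. By upper semicontinuity of the Clarke subdifferential \cite{Clarke1990}, for each $i$ there is $\rho_i>0$ with $\partial f_i(\bm y)\subseteq\partial f_i(\bar{\bm x})+\overline{\mathcal{B}(\bm 0,\gamma)}$ whenever $\bm y\in\mathcal{B}(\bar{\bm x},\rho_i)$; put $\rho:=\min_i\rho_i$. For $k$ large enough that $\lVert\bm x_{\nu_k}-\bar{\bm x}\rVert+\varepsilon_{\nu_k-1}<\rho$, every $\bm y\in\mathcal{B}(\bm x_{\nu_k},\varepsilon_{\nu_k-1})$ satisfies $\bm y\in\mathcal{B}(\bar{\bm x},\rho)$, so taking closed convex hulls and using that $\partial f_i(\bar{\bm x})+\overline{\mathcal{B}(\bm 0,\gamma)}$ is closed and convex gives $\partial_{\varepsilon_{\nu_k-1}}f_i(\bm x_{\nu_k})\subseteq\partial f_i(\bar{\bm x})+\overline{\mathcal{B}(\bm 0,\gamma)}$ for every $i$. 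Combining with the elementary inclusion $\texttt{conv}\{\cup_i(A_i+B)\}\subseteq\texttt{conv}\{\cup_iA_i\}+B$ valid for convex $B$, I obtain $\boldsymbol{\xi}^*_{\nu_k-1}\in S+\overline{\mathcal{B}(\bm 0,\gamma)}$, i.e. $\operatorname{dist}(\boldsymbol{\xi}^*_{\nu_k-1},S)\le\gamma$.

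Finally I would merge the two bounds: the triangle inequality for distances gives $\operatorname{dist}(\bm 0,S)\le\lVert\boldsymbol{\xi}^*_{\nu_k-1}\rVert+\operatorname{dist}(\boldsymbol{\xi}^*_{\nu_k-1},S)\le\delta_{\nu_k-1}+\gamma$. Letting $k\to\infty$ sends $\delta_{\nu_k-1}\to 0$, so $\operatorname{dist}(\bm 0,S)\le\gamma$, and since $\gamma>0$ is arbitrary and $S$ is closed, $\bm 0\in S=\texttt{conv}\{\cup_{i=1}^p\partial f_i(\bar{\bm x})\}$, which is exactly Clarke substationarity. I expect the main obstacle to be making the joint limit in $\bm x$ and $\varepsilon$ rigorous: one must fix the uniformity radius $\rho$ (and hence $\gamma$) \emph{before} invoking the subsequence, and verify that the vanishing $\varepsilon_{\nu_k-1}$-ball around $\bm x_{\nu_k}$ is eventually swallowed by the upper-semicontinuity neighborhood of $\bar{\bm x}$, so that the limit set is $\partial f_i(\bar{\bm x})$ rather than an $\varepsilon$-enlargement of it.
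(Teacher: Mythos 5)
Your proof is correct, and it reaches the conclusion by a genuinely different limit-passage than the paper. The paper's proof (of Theorem \ref{T3}) works sequentially: it writes each certificate as a convex combination $\boldsymbol{\xi}^*_\nu=\sum_{i=1}^p\lambda_{\nu,i}\boldsymbol{\xi}^*_{\nu,i}$ with $\boldsymbol{\xi}^*_{\nu,i}\in\partial_{\varepsilon_\nu}f_i(\bm x_{\nu+1})$, uses local boundedness to extract a further subsequence along which both the multipliers and the subgradients converge, and then invokes upper semicontinuity of the \emph{joint} map $\partial_\cdot f_i(\cdot)$ (in $\varepsilon$ and $\bm x$ simultaneously, with $\varepsilon_\nu\downarrow 0$) to place each limit $\boldsymbol{\xi}^*_i$ in $\partial f_i(\bm x^*)$; an arbitrary threshold $\kappa>0$ plays the role of your $\gamma$. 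You instead fix the enlargement \emph{before} taking any limit: from pointwise upper semicontinuity of each $\partial f_i$ at $\bar{\bm x}$ you get a radius $\rho$ such that the vanishing $\varepsilon_{\nu_k-1}$-ball around $\bm x_{\nu_k}$ is eventually absorbed, giving the set inclusion $\partial_{\varepsilon_{\nu_k-1}}f_i(\bm x_{\nu_k})\subseteq\partial f_i(\bar{\bm x})+\overline{\mathcal{B}(\bm 0,\gamma)}$, and you then finish with the elementary convexity identity $\texttt{conv}\{\cup_i(A_i+B)\}\subseteq\texttt{conv}\{\cup_iA_i\}+B$ and a distance estimate. This buys you two things: you never need to extract convergent multipliers or subgradient subsequences (no compactness bookkeeping beyond closedness of $S$), and you need only the standard $\gamma$--$\rho$ upper semicontinuity of the Clarke subdifferential at a single point rather than upper semicontinuity of the two-parameter Goldstein map. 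Your handling of the index shift is also cleaner than the paper's: you explicitly attach the certificate $\boldsymbol{\xi}^*_{\nu_k-1}$ to the point $\bm x_{\nu_k}$ at which it lives, whereas the paper passes from $\bm x_\nu\xrightarrow{\mathcal{V}}\bm x^*$ to subdifferentials evaluated at $\bm x_{\nu+1}$ with the reindexing left implicit. The paper's sequential argument, on the other hand, yields slightly more information (an explicit limiting convex combination $\sum_i\lambda_i\boldsymbol{\xi}^*_i\in\texttt{conv}\{\cup_{i=1}^p\partial f_i(\bm x^*)\}$ realizing the small norm), but for the stated conclusion your route is at least as rigorous and somewhat more economical.
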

\begin{proof}
During each iteration $\nu\geq 0$ of Algorithm \ref{Alg4}, the \texttt{DS-SP} algorithm is run and terminated after finite number of iterations. For any $\nu\geq 0$, suppose that this algorithm is terminated at iteration $k_\nu\in\mathbb{N}_0$. Thus
$$\bm x_{\nu+1}=\bm x_{k_\nu}=\,\texttt{DS-SP}\,(\{f_i\}_{i=1}^p, \bm x_\nu, \varepsilon_\nu, \delta_\nu, \beta),$$
and therefore
\begin{equation}\label{T3-1}
\min\big\{\lVert \boldsymbol{\xi}\lVert \,\, : \,\, \boldsymbol{\xi}\in\text{\rm \texttt{conv}}\{ \cup_{i=1}^p\bm{\mathcal{S}}^{k_\nu}_{\varepsilon_\nu,i}(\bm x_{\nu+1})\}   \big\}\leq \delta_\nu, \quad \text{for all}\,\, \nu\geq 0. 
\end{equation}
We note that  $\bm x_\nu\in lev_{f_j(\bm x_0)}(f_j)$, for all $\nu\geq 0$, and by Assumption \ref{Assumption 1}, the sub-level set $lev_{f_j(\bm x_0)}(f_j)$ is bounded. Consequently,  the sequence $\{\bm x_\nu\}_\nu$ has at least one accumulation point, say $\bm x^*$, i.e., there exists $\mathcal{V}\subset\mathbb{N}_0$ such that $\bm x_\nu\xrightarrow{\mathcal{V}}\bm x^*$. This fact along with \eqref{T3-1} implies 
\begin{equation}\label{T3-2}
\min\big\{\lVert \boldsymbol{\xi}\lVert \,\, : \,\, \boldsymbol{\xi}\in\text{\rm \texttt{conv}}\{ \cup_{i=1}^p\bm{\mathcal{S}}^{k_\nu}_{\varepsilon_\nu,i}(\bm x_{\nu+1})\}   \big\}\leq \delta_\nu, \quad \text{for all}\,\, \nu\in\mathcal{V}. 
\end{equation}
Let $\kappa>0$ be arbitrary. As $\delta_\nu\downarrow 0$ as $\nu\to\infty$, one can find $\bar{\nu}\in\mathcal{V}$ sufficiently large such that $\delta_\nu<\kappa$, for all $\nu\geq \bar{\nu}$. Let $\mathcal{V}':=\{\nu\in\mathcal{V} \, : \, \nu\geq \bar{\nu}  \}\subset\mathcal{V}$. Then, it immediately follows from \eqref{T3-2} that, for all  $\nu\in\mathcal{V}'$,
\begin{equation}\label{T3-3}
\lVert \boldsymbol{\xi}^*_\nu\rVert:=
	\min\big\{\lVert \boldsymbol{\xi}\lVert \,\, : \,\, \boldsymbol{\xi}\in\text{\rm \texttt{conv}}\{ \cup_{i=1}^p\bm{\mathcal{S}}^{k_\nu}_{\varepsilon_\nu,i}(\bm x_{\nu+1})\}   \big\}\leq \kappa.
\end{equation}
In addition, since 
$$\boldsymbol{\xi}^*_\nu\in\text{\rm \texttt{conv}}\{ \cup_{i=1}^p\bm{\mathcal{S}}^{k_\nu}_{\varepsilon_\nu,i}(\bm x_{\nu+1})\}\subset \text{\rm \texttt{conv}}\{ \cup_{i=1}^p\partial_{\varepsilon_{\nu}} f_i(\bm x_{\nu+1})\}, \quad \text{for all} \,\,  \nu\in\mathcal{V}',  $$
there exist
 $$\boldsymbol{\xi}^*_{\nu,i}\in\partial_{\varepsilon_\nu}f_i(\bm x_{\nu+1}), \,\, \lambda_{\nu,i}\in\mathbb{R}, \quad i=1,\ldots,p,$$
 satisfying 
 \begin{equation}\label{T3-4}
 \boldsymbol{\xi}^*_\nu=\sum_{i=1}^{p}\lambda_{\nu,i} \boldsymbol{\xi}^*_{\nu,i}, \quad \sum_{i=1}^{p} \lambda_{\nu,i}=1, \quad \lambda_{\nu,i}\geq 0, \,\,\, i=1,\ldots,p,
 \end{equation}
 for all  $\nu\in\mathcal{V}'$. The fact that $\bm x_\nu\xrightarrow{\mathcal{V}'}\bm x^*$ along with uniformly compactness of the map $\partial_\cdot f_i(\cdot)$ implies that the sequences $\{\boldsymbol{\xi}^*_{\nu,i} \}_{\nu\in\mathcal{V}'}$, for $i=1,\ldots,p$, are bounded. Now, boundedness of the sequences $\{\lambda_{\nu,i} \}_{\nu\in\mathcal{V}'}$, for $i=1,\ldots,p$, implies the existence of the infinite set  $\mathcal{V}''\subset\mathcal{V}'$ such that 
 \begin{equation}\label{T3-5}
 \boldsymbol{\xi}^*_{\nu,i}\xrightarrow{\mathcal{V}''}\boldsymbol{\xi}^*_i, \quad \lambda_{\nu,i}\xrightarrow{\mathcal{V}''}\lambda_i, \quad \lambda_i\geq 0, \quad \sum_{i=1}^{p}\lambda_{i}=1,   \quad \text{for all} \,\, i=1,\ldots,p.
 \end{equation}
 Furthermore, as the map $\partial_\cdot f_i(\cdot)$ is  upper semicontinuous, one can deduce 
 \begin{equation}\label{T3-6}
 \boldsymbol{\xi}^*_i\in\partial f_i(\bm x^*), \quad \text{for all} \,\, i=1,\ldots,p.
 \end{equation}
 Eventually, in view of \eqref{T3-4}, \eqref{T3-5} and \eqref{T3-6}, we conclude 
 \begin{equation}\label{T3-7}
 \boldsymbol{\xi}^*_\nu=\sum_{i=1}^{p} \lambda_{\nu,i} \boldsymbol{\xi}^*_{\nu,i}  \xrightarrow{\mathcal{V}''} \sum_{i=1}^{p} \lambda_i \boldsymbol{\xi}^*_i\in\texttt{conv}\{ \cup_{i=1}^p \partial f_i(\bm x^*)  \}.
 \end{equation}
 Consequently, \eqref{T3-3} and \eqref{T3-7} yield
 $$ \min\big\{\lVert \boldsymbol{\xi}\rVert \,\, : \,\, \boldsymbol{\xi}\in \texttt{conv}\{ \cup_{i=1}^p \partial f_i(\bm x^*)  \}\big\}\leq \kappa. $$
Since $\kappa>0$ was arbitrary, we conclude $\bm 0\in \texttt{conv}\{ \cup_{i=1}^p \partial f_i(\bm x^*)  \}$.
\end{proof}

\begin{remark}
	Concerning Remark 4.1 in \cite{Multi-sub4}, our proof of Theorem \ref{T3} shows the convergence of Algorithm 4 of Gebken and Peitz \cite{Multi-sub4} to a Clarke substationary point when $\delta_{\nu}\downarrow 0 , \varepsilon_{\nu} \downarrow 0$ as $\nu\to\infty$.
\end{remark}

\section{Numerical experiments} \label{Sec6}
In this section, we assess the practical performance of the proposed multi-objective optimization method. To this end, first we analyze the typical behavior of the proposed method. Next, we apply our method to approximate the Pareto fronts of five multiobjective test problems. Then, we seek a sparse solution to an underdetermined linear system by considering a nonsmooth bi-objective minimization problem. Eventually, we apply the proposed method to a collection of nonsmooth convex and nonconvex test problems, and some comparative results with a state-of-the-art algorithm are provided. Throughout this section, the proposed method is called \textbf{SUMOPT} (\textbf{SU}bgradient method for \textbf{M}ultiobjective \textbf{OPT}imization). We have implemented the method in \textsc{Matlab} (R2017b) on a machine with CPU 3.5 GHz and 8 GB of RAM.

 Regarding the parameters for \textbf{SUMOPT}, our choices are as follows.
 
 \begin{itemize}
 	\item Algorithm \ref{Alg1}. In the \texttt{LBLS} algorithm, the reduction factor $r\in(0,1)$ and the initial step size $t_0\in(\bar{t},\infty)$ are set to be $0.5$ and 2.0, respectively. In addition, for the number of backtrack steps $\tau\in\mathbb{N}$, we choose
 	$\tau:=\lceil{\frac{\ln{\bar{t}}-\ln{t_0}}{\ln{r}}-1}\rceil$. 
 	\item Algorithm \ref{Line-Search}. In the \texttt{FES} algorithm, we set $c:=0.01$. Moreover, since $\eta\in(0,0.5)$, in Line \ref{Line11} of this algorithm, one can update $t_s$ by $$t_{s+1}:=0.5 (t^u+t^l).$$
 	\item Algorithm \ref{Alg3}. In this algorithm, the lower bound $\bar t\in(0, \varepsilon)$ is set to $\varepsilon/10$. Also, the quadratic subproblem in this algorithm is solved using the \emph{quadprog} solver in \textsc{Matlab}.
 	\item Algorithm \ref{Alg4}. We work with the Armijo coefficient $\beta:=10^{-6}$, and the positive sequences $\delta_{\nu+1}:=\gamma \delta_{\nu}$, $\varepsilon_{\nu+1}:=\gamma \varepsilon_{\nu}$ with $\gamma=\delta_0=\varepsilon_0:=0.1$. Moreover, the starting point $\bm x_0\in\mathbb{R}^n$ and optimality tolerance $\rho>0$ will be specified in each experiment.

 \end{itemize}
 The parameters mentioned earlier were utilized in all the subsequent experiments, except for subsection \ref{Subsect 5.1}. In this particular case, we ran the \textbf{SUMOPT} method using $\gamma=0.5$, $\delta_0=0.3, \bar{t}:={\varepsilon}/2 $, and $t_0=0.25$ to increase the number of iterations and obtain a more comprehensive understanding of the method's performance.
 
 In the following experiments, we consider a collection of convex and nonconvex multiobjective test problems with $n=2$ and $p\in\{2,3,4,5\}$, which are presented in Table \ref{Table0}. In this table, ``P'' stands for the problem number, and the symbols ``\texttt{+}'' and ``\texttt{-}'' denote convex and nonconvex objectives, respectively. For more details on the considered objective functions, see \cite{Bagirov2014,Makela_book}. The remaining test problems are provided and described in the corresponding subsections.

 \begin{table}[h]
 	\caption{List of test problems}\label{Table0}
 	\resizebox{\textwidth}{!}{%
 		\begin{tabular}{lcllllll}
 			\toprule[1.5pt]
 			& \rule{0pt}{3ex}P &  & \multicolumn{1}{c}{$f_i$} &  & \multicolumn{1}{c}{Convexity} &   \\ 
 			\cline{2-2}\cline{4-4}\cline{6-6}\cline{8-8}
 			&\rule{0pt}{3ex}\texttt{1} &  & \texttt{[Crescent, LQ]} &  & \texttt{[-, +]} &    \\ 
 			& \texttt{2} &  &\texttt{[Mifflin2, Crescent]}  &  & \texttt{[-, -]} &    \\ 
 			& \texttt{3} &  &\texttt{[Crescent, QL]}  &  & \texttt{[-, +]} &    \\ 
 			& \texttt{4} &  & \texttt{[CB3, LQ]} &  & \texttt{[+, +]} &    \\ 
 			& \texttt{5} &  & \texttt{[CB3, Mifflin1]} &  & \texttt{[+, +]} &    \\ 
 			& \texttt{6} &  & \texttt{[Mifflin2, Mifflin1]} &  & \texttt{[-, +]} &    \\ 
 			& \texttt{7} &  & \texttt{[CB3, QL]} &  & \texttt{[+, +]} &    \\ 
 			& \texttt{8} &  & \texttt{[Mifflin2, DEM]} &  & \texttt{[-, +]} &    \\ 
 			& \texttt{9} &  & \texttt{[Mifflin2, LQ]} &  & \texttt{[-, +]} &    \\ 
 			& \texttt{10} &  & \texttt{[CB3, DEM]} &  & \texttt{[+, +]} &    \\ 
 			& \texttt{11} &  & \texttt{[DEM, QL, Mifflin1]} &  & \texttt{[+, +, +]} &    \\ 
 			& \texttt{12} &  & \texttt{[Mifflin2, Crescent, Mifflin1]} &  & \texttt{[-, -, +]} &    \\ 
 			& \texttt{13} &  & \texttt{[DEM, QL, Mifflin1, CB3]} &  & \texttt{[+, +, +, +]} &   \\ 
 			& \texttt{14} &  & \texttt{[Mifflin2, Crescent, DEM, Mifflin1]} &  & \texttt{[-, -, +, +]} &    \\ 
 			& \texttt{15} &  & \texttt{[Mifflin2, Crescent, DEM, Mifflin1, QL]} &  & \texttt{[-, -, +, +, +]} &    \\ 
 			
 			\toprule[1.5pt]
 		\end{tabular} 
 	}
 \end{table}
 Throughout this section, we also consider a state-of-the-art algorithm, namely the \textbf{M}ultiobjective \textbf{P}roximal \textbf{B}undle method (\textbf{MPB}) \cite{techreport-multi, MPB-Techreport} to provide some comparative results. For this purpose, we used the \textsc{Fortran} code of the \textbf{MPB} method  from \cite{MPB-Techreport} using the default parameters.

 \subsection{Tracking the algorithm} \label{Subsect 5.1}
 In this experiment, we analyze the typical behavior of the proposed  \textbf{SUMOPT} algorithm using the test problem P1. Indeed, we consider the following bi-objective test problem:
 \begin{equation}\label{P1}
 \min \,\, \left(f_1(\bm x), f_2(\bm x)\right) \quad \text{s.t.} \quad \bm x\in\mathbb{R}^2,
 \end{equation} 
 in which 
 \begin{align*}
 &f_1(\bm x):=\max\{x_1^2+(x_2-1)^2+x_2-1, -x_1^2-(x_2-1)^2+x_2+1\} \quad \text{(Crescent),} \\&
 f_2(\bm x):=\max\{-x_1-x_2, -x_1-x_2+x_1^2+x_2^2-1 \} \quad  \text{(LQ).}
 \end{align*}
 It is evident that $f_1(\bm x)$ is a nonconvex function, while $f_2(\bm x)$ is convex, and the nonsmooth regions of these functions are given by 
 $$\mathcal{M}_1:=\{\bm x\in\mathbb{R}^2 \,:\, x_1^2+(x_2-1)^2=1  \} \quad \text{and}\quad \mathcal{M}_2:=\{\bm x\in\mathbb{R}^2 \,:\, x_1^2+x_2^2=1  \},$$
 respectively.
  Table~\ref{Table1} reports the observed results using the starting point $\bm x_0^T=[-0.6, 0.2]$ and the optimality tolerance $\rho=5\times10^{-3}$. Each block of the table displays the results of Algorithm \ref{Alg3}, which are generated by Algorithm \ref{Alg4} at iteration $\nu\geq0$.
  
  \begin{table}
  	\caption{Numerical results of \textbf{SUMOPT} applied to problem \eqref{P1}.}\label{Table1}
  	\resizebox{\textwidth}{!}{%
  		\begin{tabular}{cllllllllllll}
  			\toprule[1.5pt]
  			\multicolumn{13}{c}{\rule{0pt}{3ex}$ \nu=0 \qquad \delta_{0}=0.3 \qquad \varepsilon_{0}=0.1$} \\ 
  			\toprule[1.5pt]
  			\rule{0pt}{3ex}$k$ &  & \multicolumn{1}{c}{$\lVert \boldsymbol{\xi}^*_k\rVert$} &  & \multicolumn{1}{c}{$\bm d_k^T$} &  & \multicolumn{1}{c}{$\tilde{I}_k$} &  & \multicolumn{1}{c}{$\bm x_{k+1}^T$} &  & \multicolumn{1}{c}{$f_1(\bm x_{k+1})$} &  & \multicolumn{1}{c}{$f_2(\bm x_{k+1})$} \\ 
  			\cline{1-1}\cline{3-3}\cline{5-5}\cline{7-7}\cline{9-9}\cline{11-11}\cline{13-13}
  			\rule{0pt}{3ex}\texttt{0} &  & \multicolumn{1}{c}{\texttt{1.3416}} &  & \texttt{(0.8944, 0.4472)} &  & \multicolumn{1}{c}{$\{\texttt{1} \}$} &  & \texttt{(-0.6000, 0.2000)} &  & \multicolumn{1}{c}{\texttt{0.2000}} &  & \multicolumn{1}{c}{\texttt{0.4000}} \\ 
  			\texttt{1} &  & \multicolumn{1}{c}{\texttt{0.3494}} &  & \texttt{(0.8598, -0.5104)} &  & \multicolumn{1}{c}{$\varnothing$} &  & \texttt{(-0.3850, 0.0723)} &  & \multicolumn{1}{c}{\texttt{0.0811}} &  & \multicolumn{1}{c}{\texttt{0.3126}} \\ 
  			\texttt{2} &  & \multicolumn{1}{c}{\texttt{1.1508}} &  & \texttt{(0.6691, 0.7431)} &  & \multicolumn{1}{c}{$\{\texttt{1} \}$} &  & \texttt{(-0.3850, 0.0723)} &  & \multicolumn{1}{c}{\texttt{0.0811}} &  & \multicolumn{1}{c}{\texttt{0.3126}} \\ 
  			\texttt{3} &  & \multicolumn{1}{c}{\texttt{0.3925}} &  & \multicolumn{1}{c}{\texttt{(0.9268, -0.3755)}} &  & \multicolumn{1}{c}{$\varnothing$} &  & \texttt{(-0.1533, -0.0214)} &  & \multicolumn{1}{c}{\texttt{0.0454}} &  & \multicolumn{1}{c}{\texttt{0.1748}} \\ 
  			\texttt{4} &  & \multicolumn{1}{c}{\texttt{1.0871}} &  & \texttt{(0.2820, 0.9593)} &  & \multicolumn{1}{c}{$\varnothing$} &  & \texttt{(-0.1445, 0.0084)} &  & \multicolumn{1}{c}{\texttt{0.0124}} &  & \multicolumn{1}{c}{\texttt{0.1360}} \\ 
  			\texttt{5} &  & \multicolumn{1}{c}{\texttt{1.0246}} &  & \texttt{(0.2820, 0.9593)} &  & \multicolumn{1}{c}{$\{\texttt{1} \}$} &  & \texttt{(-0.1445, 0.0084)} &  & \multicolumn{1}{c}{\texttt{0.0124}} &  & \multicolumn{1}{c}{\texttt{0.1360}} \\ 
  			\texttt{6} &  & \multicolumn{1}{c}{\texttt{0.1478}} &  & \multicolumn{1}{c}{\textbf{--}} &  & \multicolumn{1}{c}{\textbf{--}} &  & \multicolumn{1}{c}{\textbf{--}} &  & \multicolumn{1}{c}{\textbf{--}} &  & \multicolumn{1}{c}{\textbf{--}} \\ 
  			\toprule[1.5pt]
  			\multicolumn{13}{c}{\rule{0pt}{3ex}$\nu=1 \qquad \delta_{1}=0.15 \qquad \varepsilon_{1}=0.05$} \\ 
  			\toprule[1.5pt]
  			\rule{0pt}{3ex}\texttt{0} &  & \multicolumn{1}{c}{\texttt{1.0246}} &  & \texttt{(0.2820, 0.9593)} &  & \multicolumn{1}{c}{$\{\texttt{1} \}$} &  & \texttt{(-0.1445, 0.0084)} &  & \multicolumn{1}{c}{\texttt{0.0124}} &  & \multicolumn{1}{c}{\texttt{0.01360}} \\ 
  			\texttt{1} &  & \multicolumn{1}{c}{\texttt{0.1460}} &  & \multicolumn{1}{c}{\textbf{--}} &  & \multicolumn{1}{c}{\textbf{--}} &  & \multicolumn{1}{c}{\textbf{--}} &  & \multicolumn{1}{c}{\textbf{--}} &  & \multicolumn{1}{c}{\textbf{--}} \\ 
  			\toprule[1.5pt]
  			\multicolumn{13}{c}{\rule{0pt}{3ex}$\nu=2 \qquad \delta_{2}=0.075 \qquad \varepsilon_{2}=0.025$} \\ 
  			\toprule[1.5pt]
  			\rule{0pt}{3ex}\texttt{0} &  & \multicolumn{1}{c}{\texttt{1.0246}} &  & \texttt{(0.2820, 0.9593)} &  & \multicolumn{1}{c}{$\varnothing$} &  & \texttt{(-0.1439, 0.0103)} &  & \multicolumn{1}{c}{\texttt{0.0104}} &  & \multicolumn{1}{c}{\texttt{0.1335}} \\ 
  			\texttt{1} &  & \multicolumn{1}{c}{\texttt{1.0207}} &  & \texttt{(0.2820, 0.9593)} &  & $\{\texttt{1} \}$ &  & \texttt{(-0.1439, 0.0103)} &  & \multicolumn{1}{c}{\texttt{0.0104}} &  & \multicolumn{1}{c}{\texttt{0.1335}} \\ 
  			\texttt{2} &  & \multicolumn{1}{c}{\texttt{0.1448}} &  & \texttt{(0.9897, -0.1430)} &  & \multicolumn{1}{c}{$\varnothing$} &  & \texttt{(-0.0224, -0.0075)} &  & \multicolumn{1}{c}{\texttt{0.0079}} &  & \multicolumn{1}{c}{\texttt{0.0277}} \\ 
  			\texttt{3} &  & \multicolumn{1}{c}{\texttt{1.0158}} &  & \texttt{(0.0398, 0.9992)} &  & \multicolumn{1}{c}{$\varnothing$} &  & \texttt{(-0.0199, 0.0002)} &  & \multicolumn{1}{c}{\texttt{0.0004}} &  & \multicolumn{1}{c}{\texttt{0.0196}} \\ 
  			\texttt{4} &  & \multicolumn{1}{c}{\texttt{0.7161}} &  & \texttt{(0.9678, -0.2516)} &  & \multicolumn{1}{c}{$\{\texttt{1} \}$} &  & \texttt{(-0.0199, 0.0002)} &  & \multicolumn{1}{c}{\texttt{0.0004}} &  & \multicolumn{1}{c}{\texttt{0.0196}} \\ 
  			\texttt{5} &  & \multicolumn{1}{c}{\texttt{0.0017}} &  & \multicolumn{1}{c}{\textbf{--}} &  & \multicolumn{1}{c}{\textbf{--}} &  & \multicolumn{1}{c}{\textbf{--}} &  & \multicolumn{1}{c}{\textbf{--}} &  & \multicolumn{1}{c}{\textbf{--}} \\ 
  			\toprule[1.5pt]
  			\multicolumn{13}{c}{\rule{0pt}{3ex}$\nu=3 \qquad \delta_{3}=0.0375 \qquad \varepsilon_{3}=0.0125$} \\ 
  			\toprule[1.5pt]
  			\rule{0pt}{3ex}\texttt{0} &  & \multicolumn{1}{c}{\texttt{0.7161}} &  & \texttt{(0.9678, -0.2516)} &  & \multicolumn{1}{c}{$\varnothing$} &  & \texttt{(-0.0189, 0.0000)} &  & \multicolumn{1}{c}{\texttt{0.0003}} &  & \multicolumn{1}{c}{\texttt{0.0189}} \\ 
  			\texttt{1} &  & \multicolumn{1}{c}{\texttt{1.0006}} &  & \texttt{(0.0379, 0.9992)} &  & \multicolumn{1}{c}{$\{\texttt{1} \}$} &  & \texttt{(-0.0189, 0.0000)} &  & \multicolumn{1}{c}{\texttt{0.0003}} &  & \multicolumn{1}{c}{\texttt{0.0189}} \\ 
  			\texttt{2} &  & \multicolumn{1}{c}{\texttt{0.0190}} &  & \multicolumn{1}{c}{\textbf{--}} &  & \multicolumn{1}{c}{\textbf{--}} &  & \multicolumn{1}{c}{\textbf{--}} &  & \multicolumn{1}{c}{\textbf{--}} &  & \multicolumn{1}{c}{\textbf{--}} \\ 
  			\toprule[1.5pt]
  			\multicolumn{13}{c}{\rule{0pt}{3ex}$\nu=4 \qquad \delta_{4}=0.01875 \qquad \varepsilon_{4}=0.00625$} \\ 
  			\toprule[1.5pt]
  			\rule{0pt}{3ex}\texttt{0} &  & \multicolumn{1}{c}{\texttt{1.0006}} &  & \texttt{(0.0379, 0.9992)} &  & \multicolumn{1}{c}{$\{\texttt{1} \}$} &  & \texttt{(-0.0189, -0.0000)} &  & \multicolumn{1}{c}{\texttt{0.0003}} &  & \multicolumn{1}{c}{\texttt{0.0189}} \\ 
  			\texttt{1} &  & \multicolumn{1}{c}{\texttt{0.0190}} &  & \texttt{(0.9998, -0.0189)} &  & \multicolumn{1}{c}{$\varnothing$} &  & \texttt{(-0.0033, -0.0002)} &  & \multicolumn{1}{c}{\texttt{0.0002}} &  & \multicolumn{1}{c}{\texttt{0.0036}} \\ 
  			\texttt{2} &  & \multicolumn{1}{c}{\texttt{1.0005}} &  & \texttt{(0.0067, 0.9999)} &  & \multicolumn{1}{c}{$\{\texttt{1} \}$} &  & \texttt{(-0.0033, -0.0002)} &  & \multicolumn{1}{c}{\texttt{0.0002}} &  & \multicolumn{1}{c}{\texttt{0.0036}} \\ 
  			\texttt{3} &  & \multicolumn{1}{c}{\texttt{0.0033}} &  & \multicolumn{1}{c}{\textbf{--}} &  & \multicolumn{1}{c}{\textbf{--}} &  & \multicolumn{1}{c}{\textbf{--}} &  & \multicolumn{1}{c}{\textbf{--}} &  & \multicolumn{1}{c}{\textbf{--}} \\ 
  			\toprule[1.5pt]
  			\multicolumn{13}{c}{\rule{0pt}{3ex}$\nu=5 \qquad \delta_{5}=0.009375\qquad \varepsilon_{5}=0.003125$} \\ 
  			\toprule[1.5pt]
  			\rule{0pt}{3ex}\texttt{0} &  & \multicolumn{1}{c}{\texttt{1.0005}} &  & \texttt{(0.0067, 0.9999)} &  & \multicolumn{1}{c}{$\varnothing$} &  & \texttt{(-0.0033, -0.0000)} &  & \multicolumn{1}{c}{\texttt{0.0000}} &  & \multicolumn{1}{c}{\texttt{0.0033}} \\ 
  			\texttt{1} &  & \multicolumn{1}{c}{\texttt{1.0000}} &  & \texttt{(0.0067, 0.9999)} &  & \multicolumn{1}{c}{$\{\texttt{1} \}$} &  & \texttt{(-0.0033, -0.0000)} &  & \multicolumn{1}{c}{\texttt{0.0000}} &  & \multicolumn{1}{c}{\texttt{0.0033}} \\ 
  			\texttt{2} &  & \multicolumn{1}{c}{\texttt{0.0033}} &  & \multicolumn{1}{c}{\textbf{--}} &  & \multicolumn{1}{c}{\textbf{--}} &  & \multicolumn{1}{c}{\textbf{--}} &  & \multicolumn{1}{c}{\textbf{--}} &  & \multicolumn{1}{c}{\textbf{--}} \\ 
  			\toprule[1.5pt]
  			\multicolumn{13}{c}{\rule{0pt}{3ex}$\nu=6 \qquad \delta_{6}=0.0046875\qquad \varepsilon_{6}=0.0015625$} \\ 
  			\toprule[1.5pt]
  			\multicolumn{13}{c}{\rule{0pt}{3ex} \texttt{Termination} } \\
  			\toprule[1.5pt]
  		\end{tabular}
  	}
  \end{table}

   For $\nu=0$, since $\bm x_0\in\mathcal{M}_1$, we observe $\tilde{I}_0=\{1 \}$, which means the singleton set $\mathcal{S}_{0.1,1}^0$ is not an adequate approximation of $\partial_{0.1} f_1(\bm x_0)$ to make a reduction in objective function $f_1$. Consequently, the method admits a null step to enrich $\mathcal{S}_{0.1,1}^0$  by using the \texttt{FES} algorithm. Note that, as $2\notin\tilde{I}_0$, the \texttt{LBLS} algorithm has already obtained a descent step for $f_2$ at $\bm x_0$, and hence we do not improve our approximation of  $\partial_{0.1} f_2(\bm x_0)$ at iteration $k=0$. Once a new effective element of $\partial_{0.1} f_1(\bm x_0)$  is appended to $\mathcal{S}_{0.1,1}^0$ to form $\mathcal{S}_{0.1,1}^1$, one can see $\tilde{I}_1$ becomes empty, which means a serious step occurs at $k=1$. Moreover, since $\mathcal{S}_{0.1,1}^0\subsetneq\mathcal{S}_{0.1,1}^1$, we have $\lVert \boldsymbol{\xi}^*_1\rVert<\lVert \boldsymbol{\xi}^*_0\rVert$. In general, after each serious step, $\lVert \boldsymbol{\xi}^*_k\rVert$ becomes smaller. However, it should be noted  that the sequence $\{\lVert \boldsymbol{\xi}^*_k\rVert \}_k$ is not monotonically decreasing, yet, as shown in the proof of Theorem \ref{T3}, it has a decreasing subsequence. Eventually, at iteration $k=6$, $\lVert \boldsymbol{\xi}^*_6\rVert$ drops below $\delta_0=0.3$, and thus Algorithm \ref{Alg3} terminates.
   
  Regarding $\nu=1$, we do not see any serious step. In fact, after a null step, the method recognizes that the current point $\bm x_1$ is indeed a $\big(0.15, \mathcal{S}_{0.05}(\bm x_1)\big)$-substationary point (see Definition \ref{Def1}).
  
   For $\nu=2$, Algorithm~\ref{Alg3} is executed with starting point $\bm x_0^T=(-0.1445, 0.0084)$, at which the search direction $\bm d_0^T=(0.2820, 0.9593)$ is obtained.
Although no serious step is taken along this direction at $\nu=1$, for $\nu=2$ we observe $\tilde{I}_0=\varnothing$, indicating that a serious step is taken. This is because as $\nu$ increases, the lower bound $\bar{t}=\varepsilon_{\nu}/{2}$ in the \texttt{LBLS} algorithm decreases, resulting in more backtrack steps. Consequently, a serious step occurs. After this serious step, the current point $\bm x_1$ approaches the nonsmooth region $\mathcal{M}_1$. Hence, taking a null step allows the \texttt{FES} algorithm to capture a subgradient from the opposite side of $\mathcal{M}_1$, which significantly modifies the search direction $\bm d^T_1$. In fact, the modified search direction $\bm d^T_2$ is almost perpendicular to $\bm d^T_1$, resulting in an effective serious step. The method continues in this manner until, at $\nu=6$, we observe $\delta_6$ and $\varepsilon_6$ become smaller than the optimality tolerance $\rho=0.5\times10^{-3}$, and the \textbf{SUMOPT} algorithm terminates.
 
 \subsection{Approximating the Pareto front}\label{Subsec}
 In addition to convergence, the efficiency of a method in approximating the Pareto front is an important property. Thus, the main goal of this experiment is to employ the multi-start technique \cite{Multi-start1,Multi-start2} in order to generate an approximation of the Pareto front. To this end, we consider problems P1-P5 given in Table \ref{Table0}. To generate different Pareto points using the \textbf{SUMOPT} and \textbf{MPB} algorithms, we independently and uniformly generate $3\times10^{2}$ random starting points from the following two-dimensional box: 
 
 $$\bm{B}:=\{(x_1, x_2)\in\mathbb{R}^2 \,\, : \,\, 0 \leq x_1, x_2\leq2 \}.$$
 Moreover, the optimality tolerance $\rho$ in the \textbf{SUMOPT} method is set to $10^{-4}$.
  Figure \ref{Fig1} depicts the objective space of problems P1-P5 along with the obtained approximations of the Pareto fronts by the \textbf{SUMOPT} (top plots)  and \textbf{MPB} (bottom plots) methods.


 \begin{figure}[h]
 	\centering
 	\includegraphics[width=\textwidth]{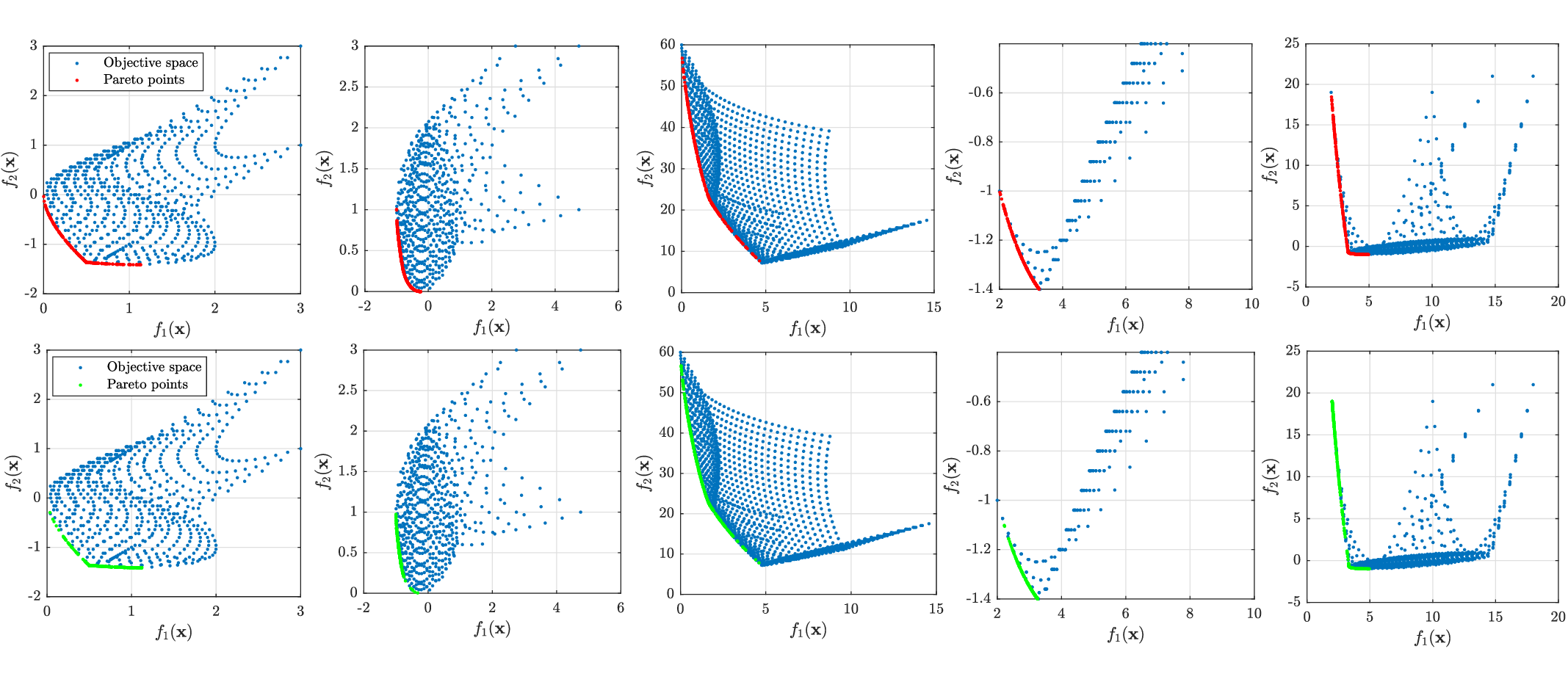}
 	\caption{Objective space  and obtained approximations of the Pareto fronts by the \textbf{SUMOPT} (top plots) and \textbf{MPB} (bottom plots) methods, for problems P1-P5.}\label{Fig1}
 \end{figure}
 
 Once the Pareto set of a bi-objective optimization problem is connected, the \textbf{H}ole \textbf{A}bsolute \textbf{S}ize (\textbf{HAS}) and \textbf{H}ole \textbf{R}elative \textbf{S}ize (\textbf{HRS}) metrics \cite{Multi-Indicators} are effective tools for assessing the uniformity  of the points along the approximate Pareto front. Assume that $\mathcal{Y}$ is a Pareto set approximation whose elements have been sorted in ascending order based on the first objective function. For $j=1,\ldots,\lvert \mathcal{Y}\rvert-1$, take $d_j:=\lVert \bm y_j-\bm y_{j+1}\rVert$, which is the Euclidean distance between two consecutive solutions $\bm y_j, \bm y_{j+1}\in\mathcal{Y}$. Then, if $\bar{D}$ is the mean of all $d_j$ for $j=1,\ldots,\lvert \mathcal{Y}\rvert-1$,  the indicators \textbf{HAS} and \textbf{HRS} are defined by 
 \begin{align*}
 &\textbf{HAS}(\mathcal{Y}):=\max \{d_j \,\,:\,\, j=1,\ldots,\lvert \mathcal{Y}\rvert-1 \}, \\&
 \textbf{HRS}(\mathcal{Y}):=\frac{1}{\bar{D}}\max \{d_j \,\,:\,\, j=1,\ldots,\lvert \mathcal{Y}\rvert-1 \}.
 \end{align*}
 The \textbf{HAS} measures the absolute size of the largest gap between consecutive solutions, while the \textbf{HRS} provides a relative measure of this gap compared to the average gap size. Obviously, for these indicators, a lower value is desirable.

 Table \ref{Table2} presents the metrics values and the cardinality of the approximate Pareto fronts, $\lvert\mathcal{Y}\rvert$, for the obtained approximations addressed in Figure \ref{Fig1}. By comparing the values of \textbf{HAS} and \textbf{HRS} between the two methods, one can conclude that \textbf{SUMOPT} offers a more uniform approximation of the Pareto fronts in the considered set of test problems. In addition, it can be seen that for problem P2 and the \textbf{MPB} method, sixty-two of the starting points did not result in Pareto points. The same happened for problem P3 and the \textbf{SUMOPT} method for seven of the starting points.
 
 \begin{table}
 	\caption{Metrics  \textbf{HAS} and \textbf{HRS} along with $\lvert\mathcal{Y}\rvert$, for problems P1-P5.} \label{Table2}
 	\resizebox{\textwidth}{!}{%
 		
 		\begin{tabular}{cllllllcllllllcl}
 			\toprule[1.5pt]
 			& \multicolumn{1}{c}{} &  & \multicolumn{5}{c}{$\textbf{SUMOPT}$} &  & \multicolumn{1}{c}{} & \multicolumn{5}{c}{$\textbf{MPB}$} &  \\ 
 			\cline{4-8}\cline{11-15}
 			\rule{0pt}{3ex}P &  &  & \multicolumn{1}{c}{\textbf{HAS}} &  & \multicolumn{1}{c}{\textbf{HRS}} &  & $\lvert\mathcal{Y}\rvert$ &  &  & \multicolumn{1}{c}{\textbf{HAS}} &  & \multicolumn{1}{c}{\textbf{HRS}} &  & $\lvert\mathcal{Y}\rvert$ &  \\ 
 			\cline{1-1}\cline{4-4}\cline{6-6}\cline{8-8}\cline{11-11}\cline{13-13}\cline{15-15}
 			\multicolumn{1}{l}{\rule{0pt}{3ex}$\texttt{1}$} &  &  & \texttt{0.0952} &  & \texttt{13.6061} &  & \texttt{300} &  &  & \texttt{0.1403} &  & \texttt{23.3442} &  & \texttt{300} &  \\ 
 			$\texttt{2}$ &  &  & \texttt{0.1379} &  & \texttt{29.0251} &  & \texttt{300} &  &  & \texttt{0.1846} &  & \texttt{31.7277} &  & \texttt{238} &  \\ 
 			$\texttt{3}$ &  &  & \texttt{1.5664} &  & \texttt{9.2399} &  & \texttt{293} &  &  & \texttt{2.4199} &  & \texttt{14.7110} &  & \texttt{300} &  \\ 
 			$\texttt{4}$ &  &  & \texttt{0.0544} &  & \texttt{11.6060} &  & \texttt{300} &  &  & \texttt{0.1334} &  & \texttt{34.2662} &  & \texttt{300} &  \\ 
 			$\texttt{5}$ &  &  & \texttt{0.6107} &  & \texttt{8.7263} &  & \texttt{300} &  &  & \texttt{1.6905} &  & \texttt{23.5538} &  & \texttt{300} &  \\ 
 			\toprule[1.5pt]
 		\end{tabular}
 	}
 \end{table}
 
 Our next example presents an interesting case. It involves a smooth bi-objective problem where the objective space is not $\mathbb{R}^p_{\geqq}$-convex \cite{Ehrgott2005}, and the set of Pareto front is not connected. In such a scenario, it is well-known that the \textbf{W}eighted \textbf{S}um (\textbf{WS}) method \cite{Ehrgott2005,Geof}, as a popular scalarization technique, falls short of generating a suitable approximation of the Pareto front. Let us design the following bi-objective test problem:
 \begin{equation}\label{FL}
 \min \,\, \left(f_1(x), f_2(x)\right) \quad \text{s.t.} \quad  x\in\mathbb{R},
 \end{equation} 
 where $f_1,f_2:\mathbb{R}\to\mathbb{R}$ are given by
 
 \begin{align*}
 &f_1(x):= \big(1 + 0.1 \sin(8x)\big)\big(\cos(x)\cos(0.6) - \sin(-0.6) \sin(x) \big),\\&
 f_2(x):= \big(1 + 0.1 \sin(8x)\big)\big(\cos(x)\sin(-0.6) + \sin(x) \cos(0.6) \big).
 \end{align*}
  Since $f_1$ and $f_2$ are periodic functions with periodicity $T=2\pi$, the objective space of problem  \eqref{FL} can be obtained, e.g., on the interval $I=[0, 2\pi]$, see Figure~\ref{Fig1'} (top-left). Let $G(x):=\nabla f_1(x) \nabla f_2(x)$. Then, it is easy to verify that the set of substationary points of problem \eqref{FL} on the interval $I$ is given by ${S}:= \{x\in I \,\,: \,\, G(x)\leq 0  \}$. The top-middle plot of Figure~\ref{Fig1'} depicts the graph of $G(x)$ together with its nonpositive parts, $G^{-}(x)$, associated with the set $S$. Furthermore, the set of substationary points along with the set of Pareto points in the objective space have been shown in the right-top and left-bottom plots of the figure, respectively.  To achieve an approximation of the Pareto
  front, we applied the \textbf{SUMOPT} method using a uniform grid of $10^{3}$ starting points from the interval $[0, 2\pi]$, and the optimality tolerance $\rho=10^{-3}$. One can see from the bottom-middle plot of Figure~\ref{Fig1'} that \textbf{SUMOPT} has successfully generated the entire Pareto elements.
 
 \begin{figure}[h]
 	\centering
 	\includegraphics[width=\textwidth]{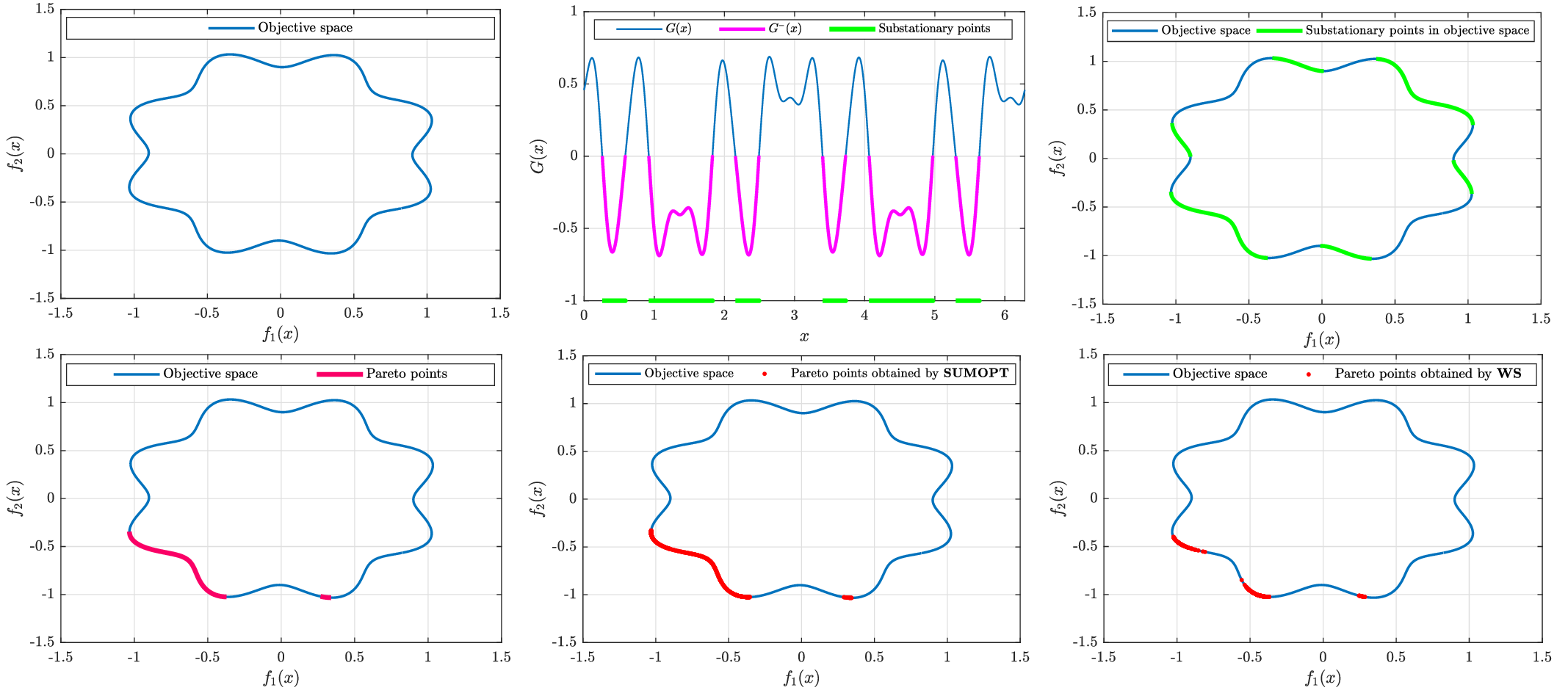}
 	\caption{Top: details of the considered test problem \eqref{FL}. Bottom: Pareto points and obtained approximation of Pareto elements by \textbf{SUMOPT} and \textbf{WS} methods.}\label{Fig1'}
 \end{figure}
 
 For problem \eqref{FL}, we consider the weighted sum problem, which is formulated as follows:
 
 \begin{equation}\label{WS}
 \min_{x \in \mathbb{R}} \lambda f_1(x) + (1-\lambda) f_2(x),
 \end{equation}
  where the weight $\lambda$ varies in the interval $[0,1]$. To solve this problem for various weights, we selected $\lambda$ from a uniform grid of $10^{3}$ points within the interval $[0,1]$. Next, for each $\lambda$,  problem \eqref{FL} was solved using the \emph{patternsearch} solver in \textsc{Matlab}. The obtained results are depicted in the bottom-right plot of Figure~\ref{Fig1'}. It is noteworthy that the \textbf{WS} technique fails to generate a significant portion of the Pareto front.

 \subsection{Sparse solution to an underdetermined linear system}
 In this experiment, we consider the following linear system
 \begin{equation}\label{Lin-Sys}
 \bm A\bm x=\bm b,
 \end{equation} 
 in which, $\bm A\in\mathbb{R}^{m\times n}$  with $m<n$, and $\bm b\in\mathbb{R}^m$. This system is referred to as underdetermined and typically has an infinite number of solutions. Such systems appear in various practical problems, including data fitting, machine learning, denoising, and image processing. The main goal is to find an approximate solution to linear system \eqref{Lin-Sys} that achieves the highest level of sparsity. The classic approach to formulate this problem is as follows:
 \begin{align}\label{F_MOP0}
 \min\,\,  \lVert\bm A\bm x- \bm b\rVert_2^2 +\lambda \lVert \bm x \rVert_1,   \quad \text{s.t.} \quad \bm x\in\mathbb{R}^n,
 \end{align}
  where $\lVert \bm x \rVert_1$ denotes the $l_1$ norm of the vector $\bm x\in\mathbb{R}^n$, and $\lambda>0$ is a positive parameter. However, choosing an appropriate value for the parameter $\lambda$ is a delicate and challenging issue in effectively solving this problem \cite{Beck}. In this regard, one alternative interesting approach is to formulate the problem by the following bi-objective optimization problem: 
 \begin{align}\label{F_MOP}
 	\min\,\, (\lVert \bm x \rVert_1, \lVert\bm A\bm x- \bm b\rVert_2^2 ) \quad \text{s.t.} \quad \bm x\in\mathbb{R}^n.
 \end{align}
  This problem is convex, but nonsmooth due to the use of $l_1$ norm. To create an instance of problem~\eqref{F_MOP}, we randomly generated the matrix $\bm A\in\mathbb{R}^{50\times100}$ and vector $\bm b\in\mathbb{R}^{50}$. The \textbf{SUMOPT} was then applied to this problem using five randomly generated starting points from a unit box, and the optimality tolerance $\rho=10^{-2}$. For each run, the obtained results have been reported in Table \ref{Table3}.
  
  \begin{table}[h]
  	\caption{Results of the \textbf{SUMOPT} method on problem \eqref{F_MOP}, for five randomly generated starting points.}\label{Table3}
  	\resizebox{\textwidth}{!}{%
  		
  		\begin{tabular}{lllllllllllllll}
  			\toprule[1.5pt]
  			& \multicolumn{1}{c}{\rule{0pt}{3ex}Run number} &  & \multicolumn{1}{c}{$\lVert \bm x^*\rVert_1$} &  & \multicolumn{1}{c}{$\lVert \bm A \bm x^*-\bm b\rVert_2^2$} &  & \multicolumn{1}{c}{\#Zero} &  & \multicolumn{1}{c}{Time (s)} &  & \multicolumn{1}{c}{\#Fun} &  & \multicolumn{1}{c}{\#Sub} &  \\ 
  			\cline{2-2}\cline{4-4}\cline{6-6}\cline{8-8}\cline{10-10}\cline{12-12}\cline{14-14}
  			& \multicolumn{1}{c}{\rule{0pt}{3ex}\texttt{1}} &  & \multicolumn{1}{c}{\texttt{1.7632}} &  & \multicolumn{1}{c}{\texttt{1.2932}} &  & \multicolumn{1}{c}{\texttt{76}} &  & \multicolumn{1}{c}{\texttt{4.7154}} &  & \multicolumn{1}{c}{\texttt{14788}} &  & \multicolumn{1}{c}{\texttt{748}} &  \\ 
  			& \multicolumn{1}{c}{\texttt{2}} &  & \multicolumn{1}{c}{\texttt{0.4483}} &  & \multicolumn{1}{c}{\texttt{5.4864}} &  & \multicolumn{1}{c}{\texttt{91}} &  & \multicolumn{1}{c}{\texttt{14.6328}} &  & \multicolumn{1}{c}{\texttt{50002}} &  & \multicolumn{1}{c}{\texttt{2189}} &  \\ 
  			& \multicolumn{1}{c}{\texttt{3}} &  & \multicolumn{1}{c}{\texttt{0.7555}} &  & \multicolumn{1}{c}{\texttt{2.8640}} &  & \multicolumn{1}{c}{\texttt{92}} &  & \multicolumn{1}{c}{\texttt{8.8045}} &  & \multicolumn{1}{c}{\texttt{30008}} &  & \multicolumn{1}{c}{\texttt{1367}} &  \\ 
  			& \multicolumn{1}{c}{\texttt{4}} &  & \multicolumn{1}{c}{\texttt{1.1938}} &  & \multicolumn{1}{c}{\texttt{1.9667}} &  & \multicolumn{1}{c}{\texttt{82}} &  & \multicolumn{1}{c}{\texttt{3.9671}} &  & \multicolumn{1}{c}{\texttt{13542}} &  & \multicolumn{1}{c}{\texttt{680}} &  \\ 
  			& \multicolumn{1}{c}{\texttt{5}} &  & \multicolumn{1}{c}{\texttt{2.1281}} &  & \multicolumn{1}{c}{\texttt{1.0854}} &  & \multicolumn{1}{c}{\texttt{85}} &  & \multicolumn{1}{c}{\texttt{4.7224}} &  & \multicolumn{1}{c}{\texttt{16182}} &  & \multicolumn{1}{c}{\texttt{797}} &  \\ 
  			\toprule[1.5pt]
  		\end{tabular}
  	}
  \end{table}

   In this table, $\bm x^*\in\mathbb{R}^{100}$ is the obtained approximate Pareto point. Additionally, \#Fun and \#Sub denote the total number of function and subgradient evaluations during the minimization process, respectively. Furthermore, \#Zero indicates the number of components in $\bm x^*$ with absolute values less than $10^{-3}$. Indeed, we define a zero component as any having an absolute value less than $10^{-3}$. For all runs, \textbf{SUMOPT} reached the desired accuracy within less than fifteen seconds of CPU time. According to the table, we can see that for the second and third runs, we obtained solutions with fewer non-zero elements, but this comes at a cost of increased residual $\lVert \bm A\bm x^*-\bm b\rVert_2^2$. Table \ref{Table3} also highlights an interesting trade-off: the solution with the smallest $l_1$ norm has the largest residual, and the solution with the smallest residual has the largest $l_1$ norm. Regarding the evaluation of functions and subgradients, we can observe that the number of function evaluations exceeds the number of subgradient evaluations. This is because the \textbf{SUMOPT} algorithm uses a limited backtracking line search to eliminate objectives that do not require computation of a new subgradient after a null step.

 \subsection{A comparison with the MPB method}
 
 This experiment aims to compare the efficiency of the proposed \textbf{SUMOPT} algorithm with the \textbf{MPB} method in terms of the number of function and subgradient evaluations. To this end, we applied these methods to a collection of fifteen nonsmooth convex and nonconvex multiobjective test problems presented in Table \ref{Table0}. Regarding the \textbf{MPB} method, we used the \textsc{Fortran} code from \cite{MPB-Techreport} with the default parameters. For the \textbf{SUMOPT} method,  we worked with the aforementioned parameters with the optimality tolerance $\rho=10^{-3}$. 
 
 Concerning the set of starting points, following \cite{Multi-sub4}, we considered a $13\times13$ uniform grid of the region $[-3, 3]\times[-3, 3]\subset\mathbb{R}^2$ as follows:
 $$\mathbf{S}:=\{ (x_1,x_2)\in\mathbb{R}^2 \,\, : \,\, x_1=-3+0.5i, \,\, x_2=-3+0.5 j, \,\, i,j=0,1,\ldots,12 \}.$$
 Indeed, we ran each problem 169 times using the set of starting points $\mathbf{S}$, and the total number of evaluations is reported in Table \ref{Table5}. It should be noted that \textbf{MPB} and \textbf{SUMOPT} have their own different first-order optimality condition. Yet, by this choice of parameters, both methods successfully reached the desired accuracy in all runs, and the observed results are distinguishable enough so that one can highlight the prominent feature of each. 
 
  In Table \ref{Table5}, for \textbf{SUMOPT}, ``Iter'' represents the total number of iterations of Algorithm \ref{Alg3} during the runs, while for \textbf{MPB}, it denotes the total number of serious steps performed across the runs. Additionally, \#Fun and \#Sub indicate the total number of function and subgradient evaluations throughout the runs. For each objective, \textbf{MPB} evaluates the function value and one arbitrary subgradient at each trial point. Consequently, the number of function and subgradient evaluations is always the same in this method.
 
As observed from the table, for all test problems, \textbf{MPB} requires fewer function evaluations than our proposed method. This can be attributed to the fact that \textbf{SUMOPT} performs a limited backtracking line search to exclude objectives for which new subgradient computations are not required after a null step. However, \textbf{SUMOPT} is more efficient than \textbf{MPB} in terms of subgradient evaluations, as \textbf{MPB} computes a new subgradient for all objectives after each null step, whereas \textbf{SUMOPT} avoids unnecessary subgradient computations. Regarding iterations, the same pattern as for the number of subgradient evaluations occurs. In summary, \textbf{SUMOPT} achieves better subgradient evaluation efficiency compared to \textbf{MPB}, though \textbf{MPB} requires fewer function evaluations through its avoidance of backtracking steps.

 \begin{table}
 	\caption{Results of the \textbf{SUMOPT} and \textbf{MPB} methods on the set of problems considered in Table \ref{Table0}.}\label{Table5}
 	\resizebox{\textwidth}{!}{%
 	\begin{tabular}{lclclclcllclclcl}
 		\toprule[1.5pt]
 		&  &  & \multicolumn{5}{c}{\rule{0pt}{3ex}\textbf{SUMOPT}} &  &  & \multicolumn{5}{c}{\textbf{MPB}} &  \\ 
 		\cline{4-8}\cline{11-16}
 		& \rule{0pt}{3ex}P &  & Iter &  & \#Fun &  & \#Sub &  &  & Iter &  & \#Fun &  & \#Sub &  \\ 
 		\cline{2-2}\cline{4-4}\cline{6-6}\cline{8-8}\cline{11-11}\cline{13-13}\cline{15-15}
 		& \rule{0pt}{3ex}\texttt{1} &  & \texttt{1085} &  & \texttt{20560} &  & \texttt{2704} &  &  & \texttt{1723} &  & \texttt{4426} &  & \texttt{4426} &  \\ 
 		& \texttt{2} &  & \texttt{878} &  & \texttt{13902} &  & \texttt{2367} &  &  & \texttt{1706} &  & \texttt{4528} &  & \texttt{4528} &  \\
 		& \texttt{3} &  & \texttt{711} &  & \texttt{11030} &  & \texttt{1831} &  &  & \texttt{1894} &  & \texttt{4454} &  & \texttt{4454} &  \\ 
 		
 		& \texttt{4} &  & \texttt{660} &  & \texttt{7842} &  & \texttt{1973} &  &  & \texttt{1046} &  & \texttt{2634} &  &\texttt{2634} &  \\ 
 		& \texttt{5} &  & \texttt{1724} &  & \texttt{30006} &  & \texttt{3761} &  &  & \texttt{2950} &  & \texttt{7332}& &\texttt{7332} &  \\ 
 		& \texttt{6} &  & \texttt{1086} &  & \texttt{18854} &  & \texttt{2655} &  &  & \texttt{2609} &  & \texttt{6842} &  & \texttt{6842} &  \\ 
 		& \texttt{7} &  & \texttt{1205} &  & \texttt{20300} &  & \texttt{2777} &  &  & \texttt{1844} &  & \texttt{4068} &  & \texttt{4068} &  \\  
 		& \texttt{8} &  & \texttt{764} &  & \texttt{13146} &  & \texttt{2188} &  &  & \texttt{869} &  & \texttt{2118} &  & \texttt{2118} &  \\ 
 		& \texttt{9} &  & \texttt{1247} &  & \texttt{24144} &  & \texttt{3050} &  &  & \texttt{1795} &  & \texttt{4352} &  & \texttt{4352} &  \\ 
 		& \texttt{10} &  & \texttt{944} &  & \texttt{17314} &  & \texttt{2394} &  &  & \texttt{916} &  & \texttt{2278} &  &\texttt{2278} &  \\
 		& \texttt{11} &  & \texttt{442} &  & \texttt{9050} &  & \texttt{2380} &  &  & \texttt{1078} &  & \texttt{3972} &  & \texttt{3972} &  \\ 
 		& \texttt{12} &  & \texttt{853} &  & \texttt{19299} &  & \texttt{3516} &  &  & \texttt{2241} &  & \texttt{11733} &  & \texttt{11733} &  \\ 
 		& \texttt{13} &  & \texttt{485} &  & \texttt{14004} &  & \texttt{3386} &  &  & \texttt{1006} &  & \texttt{4904} &  & \texttt{4904} &  \\ 
 		& \texttt{14} &  & \texttt{654} &  & \texttt{18234} &  & \texttt{4289} &  &  & \texttt{1253} &  & \texttt{9088} &  & \texttt{9088} &  \\
 		 & \texttt{15} &  & \texttt{575} &  & \texttt{22721} &  & \texttt{4878} &  &  & \texttt{819} &  & \texttt{5070} &  & \texttt{5070} &  \\
 		\toprule[1.5pt]
 	\end{tabular}
 }
 	
 \end{table}

\section{Conclusion} \label{Sec7}

We have developed a descent subgradient method for finding a Clarke substationary point of a general multiobjective optimization problem, whose objectives are locally Lipschitz. As opposed to the multiobjective proximal bundle method proposed by M\"{a}kel\"{a} et al., we do not turn the main problem into a single-objective one by using an improvement function. Instead, in order to find an effective descent direction for the main problem, the behavior of each individual objective is separately studied. This framework allows us to make use of a limited backtracking line search, which helps us determine the objectives for which improving the current approximation of the $\varepsilon$-subdifferential set is unnecessary. This strategy reduced the number of subgradient evaluations during the optimization process.

 Regarding the enrichment process of the current collection of the subgradient information, the proposed method shares a common feature with bundle-type methods, in that the $\varepsilon$-subdifferential sets are sequentially improved. In this regard, we presented  a new variant of Mifflin's line search that provides a non-redundant subgradient, which significantly modifies the current working set associated with each objective in the improvement process. We proved that such a subgradient element can be found through a finite procedure by establishing a relationship between the limited backtracking line search and the proposed Mifflin's line search. We studied the global convergence of the method to a Clarke substationary point, which strengthened the results derived in the method proposed by Gebken and Peitz.
 
 In our numerical tests, we assessed the typical behavior of our algorithm, \textbf{SUMOPT}, and observed the efficiency of the proposed limited backtracking line search in excluding objective functions from the improvement process. Meanwhile, the proposed Mifflin's line search showed its ability in modifying the inefficient search directions, after each null step. Next, we showed the efficiency of our method in generating a uniform approximation of the Pareto fronts. For finding a sparse solution of an underdetermined linear system, our method revealed a trade-off between the sparsity of the solution and the value of the remainder along the Pareto front of the problem. Eventually, our comparative results based on fifteen nonsmooth test problems showed that \textbf{SUMOPT} is a big rival to bundle-like methods, especially in case the computation of subgradients is a time-consuming process.


\end{document}